\numberwithin{equation}{section}
\theoremstyle{plain}
\newtheorem{theorem}{Theorem}
\newtheorem{lemma}[theorem]{Lemma}
\newtheorem{corollary}[theorem]{Corollary}
\newcommand{\R}{\mathbb{R}}
\newcommand{\Q}{\mathbb{Q}}
\newcommand{\Z}{\mathbb{Z}}
\newcommand{\eps}{\epsilon}
\newcommand{\C}{\mathbb{C}}
\newcommand{\Fp}{\mathbb{F}_p}
\newcommand{\Fl}{\mathbb{F}_{\ell}}
\newcommand{\del}{\delta}
\newcommand{\Del}{\Delta}
\newcommand{\spmod}{\hspace{-8pt}\pmod}
\newcommand{\Et}{\widetilde{E}}
\newcommand{\Mt}{\widetilde{M}}
\newcommand{\hcal}{\mathcal{H}}
\newcommand{\St}{\widetilde{S}}
\newcommand{\al}{\alpha}
\DeclareMathOperator{\gal}{Gal}
\DeclareMathOperator{\tr}{Tr }
\DeclareMathOperator{\frob}{Frob}
\DeclareMathOperator{\SL}{SL}
\DeclareMathOperator{\GL}{GL}
\theoremstyle{remark}
\newtheorem*{remark}{Remark}
\newtheorem*{example}{Example}
\numberwithin{theorem}{section} \numberwithin{equation}{section}
\begin{document}
\title[Congruence Properties of Borcherds Product Exponents] {Congruence Properties of Borcherds Product Exponents}

\author{Keenan Monks} 

\author{Sarah Peluse} 

\author{Lynnelle Ye} 

\address{Keenan Monks, 73 N James St, Hazleton, PA 18201}
\email{monks@harvard.edu}
\address{Sarah Peluse, 491 Parkview Terrace, Buffalo Grove, IL 60089}
\email{peluse@uchicago.edu}
\address{Lynnelle Ye, P.O. Box 16820, Stanford, CA 94309}
\email{lynnelle@stanford.edu}
\thanks{The authors are grateful for the NSF's support of the REU at Emory University.} %\subjclass[] {}

%%\date{\today}
\begin{abstract}
In his striking 1995 paper, Borcherds~\cite{Borcherds} found an infinite product expansion for certain modular forms with CM divisors. In particular, this applies to the Hilbert class polynomial of discriminant $-d$ evaluated at the modular $j$-function. Among a number of powerful generalizations of Borcherds' work, Zagier made an analogous statement for twisted versions of this polynomial. He proves that the exponents of these product expansions, $A(n,d)$, are the coefficients of certain special half-integral weight modular forms. We study the congruence properties of $A(n,d)$ modulo a prime $\ell$ by relating it to a modular representation of the logarithmic derivative of the Hilbert class polynomial.
\end{abstract}
\maketitle

\section{Introduction and statement of results}

The modular $j$-invariant, one of the most important functions in modern number theory, has $q$-expansion $$j(z)=q^{-1}+744+196884q+21493760q^2+\cdots.\hspace{.5in} (q=e^{2\pi iz})$$ Singular moduli are the values of $j(z)$ at imaginary quadratic arguments. To be more precise, let $Q_d$ be the set of positive definite binary quadratic forms $Q(x,y)$ of discriminant $-d<0$, and let $\alpha_Q$ be the unique root in the upper half-plane of some $Q(x,1)\in Q_d$. Given one of these roots $\alpha_Q$, the Hilbert class polynomial of discriminant $-d$, %where $-d$ is a fundamental discriminant, 
$\hcal_d(z)$, is defined to be the minimal polynomial of $j(\al_Q)$ (with minor modifications if $d/3$ or $d/4$ is a square). The roots of $\hcal_d(z)$ generate the Hilbert class fields of imaginary quadratic fields. 

The study of the form $\hcal_d(j(z))$ has a long history. Until recently, it was notoriously difficult to compute Hilbert class polynomials.
Borcherds~\cite{Borcherds} was the first to describe an infinite product expansion for $\hcal_d(j(z))$,
\begin{equation}
\hcal_d(j(z))=\prod_{Q\in Q_d/\SL_2(\Z)}(j(z)-j(\alpha_Q))^{\frac{1}{\omega_Q}}=q^{-h(d)}\prod_{n=1}^{\infty}(1-q^n)^{A_d(n)}
\end{equation}
where $h(d)$ is the Hurwitz-Kronecker class number and $\omega_Q$ is the weight of $Q$ (see \cite{Zagier}). Zagier~\cite{Zagier} generalized these to a setting where Borcherds products and their twisted analogues are easier to compute. He defines, for all discriminants $D>0$ and $-d<0$, a twisted Hilbert class polynomial by  
\begin{equation}\label{twists}
\hcal_{D,d}(j(z))=\prod_{Q\in Q_{Dd}/\SL_2(\Z)}(j(z)-j(\alpha_Q))^{\chi(Q)}=\prod_{n=1}^{\infty}\left(\prod_{k=1}^{D-1}(1-\zeta_D^kq^n)^{\left(\frac Dk\right)}\right)^{A_{D,d}(n)}
\end{equation}
where $\zeta_D=e^{2\pi i/D}$ and $\chi(Q)=\left(\frac{-d}{p}\right)$ for some prime $p$ represented by $Q$ that does not divide $dD$.

In light of this, we study the natural question of the distribution of the exponents $A_d(n)$ among residue classes modulo $\ell$ for various primes $\ell$, and we show that these exponents often possess unexpected properties. To this end, let 
\begin{equation}
\del_{d}(t,\ell;X)=\frac{\#\{p<X : p\text{ is prime and }A_d(p)\equiv t\spmod{\ell}\}}{\pi(X)}
\end{equation}
where $\pi(X)$ is the number of primes less than $X$. For certain choices of $d$ depending on $\ell$, we will be able to compute the asymptotic value of $\del_{d}(t,\ell;X)$ for each $t\in\Z/\ell\Z$. For example, for $\ell=11$ and $d=4$, we have the following:
\begin{table}[htbp]\label{asymptotic}\caption{}
\begin{flushleft}
\begin{tabular}{|r|c|c|c|c|c|c|}
\hline
$X$& $\del_{4}(0,11;X)$& $\del_{4}(1,11;X)$& $\del_{4}(2,11;X)$& $\del_{4}(3,11;X)$&$ \del_{4}(4,11;X)$&$ \del_{4}(5,11;X)$\\
\hline
$10^4$       &$.0829$&$.0928$&$.0887$&$.0911$&$.0862$&$.0903$\\
$10^5$     &$.0908$&$.0927$&$.0853$&$.0898$&$.0883$&$.0888$\\
$10^6$  &$.0899$&$.0897$&$.0891$&$.0915$&$.0887$&$.0894$\\
$10^7$&$.0898$&$.0909$&$.0901$&$.0903$&$.0897$&$.0895$\\
$2\cdot10^7$&$.0899$&$.0905$&$.0901$&$.0903$&$.0902$&$.0896$\\
$5\cdot10^7$&$.0899$&$.0902$&$.0901$&$.0900$&$.0902$&$.0897$\\
\vdots& \vdots&\vdots &\vdots &\vdots &\vdots &\vdots\\
$\infty$  & $\frac{9}{100}=.09$& $\frac{9}{100}=.09$& $\frac{9}{100}=.09$&$ \frac{9}{100}=.09$& $\frac{9}{100}=.09$& $\frac{9}{100}=.09$\\
\hline
\end{tabular}
\end{flushleft}
\vspace{.25in}
\begin{flushleft}
\begin{tabular}{|r|c|c|c|c|c|}
\hline
$X$& $\del_{4}(6,11;X) $&$\del_{4}(7,11;X)$ &$ \del_{4}(8,11;X)$&$ \del_{4}(9,11;X)$&$\del_{4}(10,11;X)$\\
\hline
$10^4$       &$.0846$&$.0960$&$.1009$&$.1066$&$.0797$\\
$10^5$     &$.0902$&$.0925$&$.0955$&$.0948$&$.0914$\\
$10^6$  &$.0893$&$.0913$&$.0976$&$.0920$&$.0914$\\
$10^7$&$.0901$&$.0906$&$.0986$&$.0898$&$.0907$\\
$2\cdot10^7$&$.0898$&$.0902$&$.0991$&$.0897$&$.0906$\\
$5\cdot10^7$&$.0899$&$.0901$&$.0991$&$.0899$&$.0908$\\
\vdots& \vdots&\vdots &\vdots &\vdots &\vdots \\
$\infty$    & $\frac{9}{100}=.09$& $\frac{9}{100}=.09$& $\frac{119}{1200}\approx.0992$& $\frac{9}{100}=.09$&$\frac{109}{1200}\approx.0908$\\
\hline
\end{tabular}
\end{flushleft}
\end{table}

Notice that most congruence classes modulo $11$ asymptotically contain $9$ percent of all $A_d(p)$, but the classes of $8$ and $10$ have a slight excess, with $9.92\ldots$ percent and $9.08\ldots$ percent, respectively. As we shall see, this is a consequence of the fact that the coefficients of the logarithmic derivative of $\hcal_4(j(z))$ can be written modulo $11 $ as a function of the coefficients of a Hecke eigenform, and by a deep theorem of Deligne, the latter induces a Galois representation whose traces are the corresponding coefficients, which turn out to be dictated by group theory. Hence the given distribution of $A_d(p)$ modulo $11$ is actually a statement about the distribution of the conjugacy classes in the image of the representation, which is given by the Chebotarev Density Theorem.

Let $S_{\ell+1}$ be the space of cusp forms of weight $\ell+1$ on $\SL_2(\Z)$. Then we have the following congruence for $A_d(n)$ modulo $\ell$.
\begin{theorem}\label{camain}
\label{formula}
Suppose $\ell$ is prime and $-d<0$ and $D>0$ are fundamental discriminants such that $dD<\ell$ is fundamental, $\ell$ is inert in $\Q(\sqrt{-Dd}),$ $\left(\frac{\ell}{Dd}\right)=1,$ and $\ell\nmid n$. Also let $\nu(m)$ be the Dirichlet inverse of $\sum\limits_{k=1}^{D-1}\left(\frac Dk\right)\zeta_D^{km}$. Then there exist constants $c_0,c_1\cdots c_r$ depending only on $d$ and $\ell$ such that
\begin{align}
A_{D,d}(n)\equiv \frac1n\sum_{m|n}\nu(n/m)(-24c_0\sigma_1(m)+c_1a_1(m)+\dotsb+c_ra_r(m))\spmod{\ell}
\end{align}
where $r$ is the dimension of $S_{\ell+1}$ and $f_i(z)=\sum_{n=1}^{\infty}a_i(n)q^n$ are the normalized Hecke eigenforms in $S_{\ell+1}$.
\end{theorem}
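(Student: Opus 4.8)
The plan is to take the logarithmic derivative of Zagier's product~(1.2), identify the resulting $q$-series modulo $\ell$ with (the reduction of) a classical modular form of weight $\ell+1$ on $\SL_2(\Z)$, and then recover $A_{D,d}(n)$ by inverting a Dirichlet convolution. Concretely, I would first apply $\theta:=q\frac{d}{dq}=\frac{1}{2\pi i}\frac{d}{dz}$ to $\log\hcal_{D,d}(j(z))$; the factor $1/m$ in $-\log(1-x)=\sum_{m\ge1}x^m/m$ is cancelled by $\theta$, so~(1.2) yields
\[
\theta\log\hcal_{D,d}(j(z))\;=\;e\;-\;\sum_{m\ge1}\Big(\sum_{n\mid m}n\,A_{D,d}(n)\,b(m/n)\Big)q^m,
\]
where $b(m)=\sum_{k=1}^{D-1}\left(\frac Dk\right)\zeta_D^{km}$ and $e$ is the order of $\hcal_{D,d}(j(z))$ at $\infty$ (equal to $-\sum_Q\chi(Q)$). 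Since $b(1)$ is a nonzero Gauss sum, $b$ has a Dirichlet inverse, namely $\nu$; writing $g(m):=\sum_{n\mid m}n\,A_{D,d}(n)\,b(m/n)$ for the arithmetic function above, Dirichlet inversion gives $n\,A_{D,d}(n)=-\sum_{m\mid n}g(m)\,\nu(n/m)$. As $\ell\nmid n$, division by $n$ is legitimate mod $\ell$, so it suffices to compute $g(m)\bmod\ell$, equivalently the reduction of the $q$-expansion of $\theta\log\hcal_{D,d}(j(z))$.

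The crux is to show $\theta\log\hcal_{D,d}(j(z))$ is congruent mod $\ell$ to a holomorphic modular form of weight $\ell+1$ on $\SL_2(\Z)$. As a function of $z$ it equals $\theta j\cdot\sum_Q\chi(Q)/(j(z)-j(\alpha_Q))$, a weight-$2$ meromorphic modular form that is holomorphic away from the CM points $\alpha_Q$ of discriminant $-Dd$ and takes the value $e$ at $\infty$; being a logarithmic derivative it has at worst simple poles at the $\alpha_Q$, and this remains true after reduction mod $\ell$, even when several $\alpha_Q$ collide. Since $-Dd$ is a fundamental discriminant with $Dd<\ell$ and $\ell$ is inert in $\Q(\sqrt{-Dd})$, a theorem of Deuring shows each $\alpha_Q$ reduces to a supersingular point of $X(1)_{\overline{\mathbb{F}}_\ell}$, where the Hasse invariant --- the reduction of $E_{\ell-1}$, whose $q$-expansion is $1$ --- vanishes to order at least $1$ (one must check this dominates the pole order even at $j=0$ and $j=1728$ when those are supersingular). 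Hence $E_{\ell-1}(z)\cdot\theta\log\hcal_{D,d}(j(z))$ reduces, modulo a chosen prime $\lambda\mid\ell$, to a \emph{holomorphic} modular form of weight $2+(\ell-1)=\ell+1$ with the same $q$-expansion; the hypothesis $\left(\frac{\ell}{Dd}\right)=1$ forces that $q$-expansion to have coefficients in $\Fl$, so (using $\ell\ge5$) it is the reduction of a classical weight-$\ell+1$ form, hence a combination $c_0E_{\ell+1}+c_1f_1+\cdots+c_rf_r$ with $c_0=e\bmod\ell$ the constant term and $c_1,\dots,c_r\in\Fl$ depending only on $d,D,\ell$. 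Finally, Kummer's congruence $\frac{2(\ell+1)}{B_{\ell+1}}\equiv24\pmod\ell$ (valid since $\ell+1\equiv2\pmod{\ell-1}$) together with $\sigma_\ell(m)\equiv\sigma_1(m)\pmod\ell$ gives $E_{\ell+1}\equiv E_2=1-24\sum_{m\ge1}\sigma_1(m)q^m\pmod\ell$.

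Assembling: comparing $q^m$-coefficients ($m\ge1$) in $\theta\log\hcal_{D,d}(j(z))\equiv c_0E_2+\sum_ic_if_i\pmod\ell$ gives $g(m)\equiv24c_0\sigma_1(m)-\sum_ic_ia_i(m)\pmod\ell$, and substituting into $A_{D,d}(n)=-\frac1n\sum_{m\mid n}g(m)\nu(n/m)$ produces exactly the stated formula. I expect the second paragraph to be the main obstacle. Because the coefficients of $\hcal_{D,d}(j(z))$ lie in the genus field of $\Q(\sqrt{-Dd})$ rather than in $\Q$, ``reduction mod $\ell$'' must be made precise by fixing a prime $\lambda\mid\ell$ and checking $\lambda$-integrality of all relevant $q$-expansions, and one must then combine Deuring's supersingular reduction with the order bookkeeping at the elliptic points $j=0,1728$. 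The hypotheses $Dd<\ell$, $\ell$ inert in $\Q(\sqrt{-Dd})$, and $\left(\frac{\ell}{Dd}\right)=1$ are precisely what make this reduction well-behaved; the remaining steps are formal manipulations with $q$-expansions and Dirichlet convolutions.
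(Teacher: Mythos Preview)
Your overall architecture is exactly the paper's: compute $\theta\log\hcal_{D,d}(j(z))$ from Zagier's product, show its reduction lies in $\Mt_{\ell+1}$, decompose as $c_0E_{\ell+1}+\sum_ic_if_i\equiv c_0E_2+\sum_ic_if_i$, and Dirichlet-invert against $b(m)=\sum_k\bigl(\frac Dk\bigr)\zeta_D^{km}$ to recover $A_{D,d}(n)$.

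The genuine difference is in how the modularity step is proved. The paper argues algebraically: it writes $E_{\ell-1}=\Delta^mE_4^{\delta}E_6^{\epsilon}\tilde E_{\ell-1}(j)$, invokes the Kaneko--Zagier identity $s_\ell(j)\equiv\pm j^{\delta}(j-1728)^{\epsilon}\tilde E_{\ell-1}(j)\pmod\ell$, and then builds an explicit weight-$(\ell-1)$ form $M(z)\equiv1\pmod\ell$ whose zero divisor contains that of $\hcal_{Dd}(j(z))$. For this one needs $\hcal_{Dd}(x)\mid s_\ell(x)$ in $\Fl[x]$, which is where the hypothesis $\bigl(\frac{\ell}{Dd}\bigr)=1$ enters, via Dorman's theorem that $\hcal_{Dd}$ then has no repeated roots mod~$\ell$. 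Your route is the geometric counterpart: Deuring's supersingular reduction plus the observation that a logarithmic derivative retains at worst simple poles even when several $j(\alpha_Q)$ coalesce mod~$\ell$, so that multiplying once by the Hasse invariant $E_{\ell-1}$ clears all poles. This neatly sidesteps the need for separability of $\hcal_{Dd}\bmod\ell$, and hence for Dorman---so your attribution of the hypothesis $\bigl(\frac{\ell}{Dd}\bigr)=1$ to ``coefficients landing in $\Fl$'' is not the role it plays in the paper. The paper's construction, in exchange, is completely explicit, stays over~$\Z$ throughout, and handles the elliptic points $j=0,1728$ automatically through the $E_4^{\delta}E_6^{\epsilon}/j^{\delta}(j-1728)^{\epsilon}$ bookkeeping rather than by a separate order check.
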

\begin{remark}
When $D=1$, we have $\nu(n/m)=\mu(n/m)$, so this formula simplifies. A more general version of this theorem can be obtained when the conditions $\ell>d$ and $\left(\frac{\ell}{d}\right)$ are not necessarily met. The more general conditions are made clear in Section 3. Although at first glance, this strategy appears to give no information in the case $\ell|n$, there may be a similar result in such cases if we instead look modulo powers of $\ell$ exceeding the maximum power of $\ell$ dividing $n$.
\end{remark}
When $\ell\in\{5,7,13\}$ (the primes for which $\dim_{\C}S_{\ell+1}=0$) and $D=1$ this implies the trivial congruences 
\[
A_d(n)\equiv -24h(d)\spmod \ell.
\]
For $\ell\in\{11,17,19\}$, we have $\dim_{\C}S_{\ell+1}=1$. If we let $D=1$, and $n=p\neq \ell$, the formula simplifies to
\[
A_d(p) \equiv -24h(d)+\frac{c_2}{p}(a_1(p)-1)\spmod{\ell}.
\]

These formulas relate Borcherds product exponents to explicit modular elliptic curves. For example, if $d=4$ and $\ell=11$, $S_{12}$ is spanned by the weight $12$ cusp form with Fourier expansion
$$\Delta(z)=q-24q^2+252q^3-1472q^4+4830q^5-6048q^6+\cdots=\sum_{n=1}^{\infty}\tau(n)q^n,$$  so in this case $a_1(m)=\tau(m).$ 
We have that $$\Delta(z)\equiv \eta(z)^2\eta(11z)^2=\sum_{n=1}^{\infty}a_{X_0(11)}(n)q^n \pmod{11},$$ which is the modular form corresponding to the elliptic curve $X_0(11)$, which in turn implies that
$$a_{X_0(11)}(p)=p+1-\#\left(X_0(11)/\Fp\right).$$  
Thus from Theorem~\ref{formula}, the values of $A_d(p)$ modulo $11$ correspond to the number of points on $X_0(11)$ over $\Fp$, which has defining equation
$$y^2+y=x^3-x^2-10x-20.$$ 

Therefore, Table~\ref{asymptotic} reflects the distribution of the pairs $(p,a_{X_0(11)}(p))$ modulo $11$. Serre proved (see Theorem 11 of \cite{Serre}) that every modular form of weight $\ell+1$ modulo $\ell$ on $\SL_2(\Z)$ corresponds to a weight $2$ cusp form on $\Gamma_0(\ell)$. Thus we can see that the same phenomenon occurs for all pairs $(d,\ell)$ given here.
\begin{table}[htbp]\caption{}
\begin{tabular}{|c|l|}
\hline
$\ell$& $d$ \\
\hline
%5& 3, 7, 8, 28, 43, 67, 163\\
%7 & 4, 7, 8, 11, 28, 43, 67, 163\\
11& 3, 4, 11, 12, 15, 20, 67, 115, 148, 163, 267\\
%13& 7, 8, 11, 19, 28, 67, 163\\
17& 3, 7, 11, 12, 24, 28, 88, 91, 163, 267, 403\\
19& 4, 7, 11, 19, 20, 28, 35, 43, 163, 187, 235, 427\\
\hline
\end{tabular}
\label{smalld}
\end{table}

In these cases, the curves $X_0(17)$ and $X_0(19)$ are given by the defining equations $$X_0(17):y^2+xy+y=x^3-x^2-6x-4$$ and $$X_0(19):y^2+y=x^3+x^2-9x-15.$$

To prove these congruences, we will consider the logarithmic derivative of Zagier's twisted Hilbert class polynomials and use their relationship to supersingular polynomials to prove that they lie in $\Mt_{\ell+1},$ the space of weight $\ell+1$ modular forms modulo $\ell$. Dorman's work on differences of singular moduli in~\cite{Dorman} will be useful here. Using this, we find a formula for the exponents $A_{D,d}(n)$ in terms of the coefficients of Eisenstein series and cusp forms that form a basis for $S_{\ell+1}$. 

In Section~\ref{zagier}, we rigorously revisit Borcherds product expansions and their generalizations by Zagier. In Section~\ref{prelim}, we recall essential properties of Eisenstein series, Hilbert class polynomials, and supersingular polynomials.  In Section~\ref{proof}, we use these properties to prove Theorem~\ref{formula}. In Section~\ref{example}, we use the theory of Galois representations and the Chebotarev Density Theorem to determine the distribution of $p,a_1(p),\dotsc,a_k(p)$ and we give two examples by applying this procedure to two specific cases, $d=4,\ell=11$ and $d=20,\ell=31$.

\section*{Acknowledgements}

We would like to thank Ken Ono for his guidance and encouragement throughout this project.

\section{A theorem of Zagier}\label{zagier}

Define $M_{1/2}^{!}(\Gamma_{0}(4))$ to be the space of all weight $1/2$ modular forms on $\Gamma_0(4)$ which are meromorphic at the cusps and holomorphic everywhere else. It turns out that the exponents in the infinite product expansions of $\hcal_{d}(x)$ and $\hcal_{D,d}(x)$ are the coefficients of special modular forms in the ``plus space'' $M_{1/2}^{!}$, consisting of elements $\sum{a(n)q^{n}}$ of $M_{1/2}^{!}(\Gamma_{0}(4))$ such that $a(n)$ is nonzero only for $n\equiv 0$ or $1\pmod{4}$ and a finite number of $n<0$. While we will not be using the modularity of these forms directly, we will be using their coefficients as a way to study Hilbert class polynomials. There is a unique basis of $M_{1/2}^{!}$ consisting of the forms $f_d(z)$ whose coefficients are supported at $0,1$ modulo $4$. For every nonnegative integer $d$ congruent to $0$ or $3$ modulo $4$, $f_d(z)$ has a Fourier expansion of the form $q^{-d}+\sum\limits_{n=1}^{\infty}A(n,d)q^n$. The Fourier expansions of the first few $f_d$ are
\begin{align*}
f_0(z) &= 1+2q+2q^4+2q^9+2q^{16}+2q^{25}+2q^{36}+2q^{49}+2q^{64}+2q^{81}+\cdots,\\
f_3(z) &= q^{-3}-248q+26572q^4-85995q^5+1707264q^8-4096248q^9+\cdots,\\%O(q^{12}),\\
f_4(z) &= q^{-4}+492q+143376q^4+565760q^5+18473000q^8+51180012q^9+\cdots,\\%O(q^{12}).
f_7(z) &= q^{-7}-4119q+8288256q^4-52756480q^5+5734772736q^8+\cdots.
\end{align*}

%$\omega_Q$ is the weight of $Q$, defined to be $3$ when $j(\alpha_Q)=0$, $2$ when $j(\alpha_Q)=1728$, and $1$ otherwise. The degree of this polynomial is denoted by the Hurwitz-Kronecker class number $h(d)$.
Borcherds' infinite product expansion \cite{Borcherds} then takes the specific form
\[
\hcal_d(j(z))=q^{-h(d)}\prod_{n=1}^{\infty}(1-q^n)^{A(n^2,d)}.
\]

Just as Borcherds found a product expansion for $\hcal_d(x)$, Zagier \cite{Zagier} extended this to the twisted functions $\hcal_{D,d}(x)$ defined in (\ref{twists}).

\begin{theorem}[Zagier \cite{Zagier}]\label{Zag}
Let $D>1$ and $-d$ be a positive and negative discriminant, respectively, which are fundamental and relatively prime. 
Then we have a product expansion
\[
\hcal_{D,d}(j(z))=\prod_{n=1}^{\infty}P_D(q^n)^{A(n^2D,d)}
\]
where, if $\zeta_D=e^{2\pi i/D}$,
\[
P_D(t)=\prod_{k=1}^{D-1}(1-\zeta_D^kt)^{\left(\frac Dk\right)}.
\]
\end{theorem}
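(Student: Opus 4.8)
The plan is to show that the two sides agree as $q$-series by proving they have the same logarithmic derivative, $-q\tfrac{d}{dq}\log(\cdot)=-\tfrac{1}{2\pi i}\tfrac{d}{dz}\log(\cdot)$, and the same (nonzero) constant term. For the left side, the definition $\hcal_{D,d}(j(z))=\prod_{Q\in Q_{Dd}/\SL_2(\Z)}(j(z)-j(\alpha_Q))^{\chi(Q)}$ gives
\[
-q\frac{d}{dq}\log\hcal_{D,d}(j(z))\;=\;-\sum_{Q}\chi(Q)\,\frac{\tfrac{1}{2\pi i}j'(z)}{\,j(z)-j(\alpha_Q)\,}.
\]
I would then invoke the classical generating-function identity for the Faber polynomials $j_m$ (the weakly holomorphic modular functions on $\SL_2(\Z)$ with $q$-expansion $q^{-m}+O(q)$, so that $j_0=1$ and $j_1=j-744$): one has $\sum_{m\ge 0}j_m(\tau)q^m=-\tfrac{1}{2\pi i}j'(z)\big/(j(z)-j(\tau))$, equivalently $\big(E_4^2E_6/\Delta\big)\big/(j(z)-j(\tau))$ since $\tfrac{1}{2\pi i}j'=-E_4^2E_6/\Delta$. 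This identifies the left side with $\sum_{m\ge 0}\mathbf{t}_{D,d}(m)q^m$, where $\mathbf{t}_{D,d}(m):=\sum_{Q}\chi(Q)j_m(\alpha_Q)$ is the twisted trace of $j_m$. Because $\chi$ is the genus character attached to the factorization $-Dd=(-d)\cdot D$ and is nontrivial once $D>1$, its sum over the form class group vanishes, so $\mathbf{t}_{D,d}(0)=0$; and since $j(z)-j(\alpha_Q)=q^{-1}(1+O(q))$, this shows $\hcal_{D,d}(j(z))=1+O(q)$.

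For the right side, set $c_D(\mu):=\sum_{k=1}^{D-1}\left(\frac Dk\right)\zeta_D^{k\mu}$. Expanding $\log(1-\zeta_D^kq^n)=-\sum_{\mu\ge 1}\zeta_D^{k\mu}q^{n\mu}/\mu$ and differentiating gives
\[
-q\frac{d}{dq}\log\prod_{n\ge 1}P_D(q^n)^{A(n^2D,d)}\;=\;\sum_{N\ge 1}\Big(\sum_{n\mid N}n\,A(n^2D,d)\,c_D(N/n)\Big)q^N,
\]
which again has vanishing constant term, while $P_D(q^n)=1+O(q^n)$ makes the product $1+O(q)$. Two power series with constant term $1$ and equal logarithmic derivatives coincide, so the theorem reduces to the coefficient identity $\mathbf{t}_{D,d}(N)=\sum_{n\mid N}n\,A(n^2D,d)\,c_D(N/n)$ for all $N\ge 1$.

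This coefficient identity is the twisted version of Zagier's trace-of-singular-moduli and duality theorems. For $N=1$ it reads $\sum_{Q}\chi(Q)j_1(\alpha_Q)=\sqrt D\cdot A(D,d)$, using that $c_D(1)$ is the Gauss sum of the real primitive character modulo $D$, equal to $\sqrt D$ for positive fundamental $D$. I would prove this by pairing the weight-$1/2$ basis element $f_d=q^{-d}+\sum_n A(n,d)q^n$ against the twisted weight-$3/2$ Poincar\'e/Eisenstein series whose positive Fourier coefficients package the $\mathbf{t}_{D,d}(m)$, via the residue pairing between $M_{1/2}^!$ and $M_{3/2}^!$ (the sum $\sum_n c_f(n)\,c_g(-n)$ of products of opposite-index Fourier coefficients of weakly holomorphic forms of these two weights vanishes identically). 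The cases $N>1$ then follow from $N=1$ by the Hecke-operator relations expressing the $j_m$ — and correspondingly the weight-$3/2$ generating functions $\sum_d\mathbf{t}_{D,d}(m)q^d$ — in terms of Hecke translates of the $m=1$ object, together with the standard evaluation of $c_D(\mu)$ when $\gcd(\mu,D)>1$.

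The hard part is precisely this core input — the twisted trace identity, or equivalently the $M_{1/2}^!$–$M_{3/2}^!$ duality — which genuinely requires either Zagier's holomorphic-projection and Poincar\'e-series argument or the twisted singular theta lift of Borcherds and Bruinier; I would cite it rather than reprove it. Everything else (the two logarithmic-derivative computations, the Gauss-sum bookkeeping, and the matching of constant terms) is routine, and the untwisted product stated above is handled identically, the displacement factor $q^{-h(d)}$ there reflecting that $\sum_Q\tfrac{1}{\omega_Q}=h(d)\ne 0$.
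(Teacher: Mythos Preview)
The paper does not give a proof of this statement at all: Theorem~\ref{Zag} is stated with a citation to Zagier~\cite{Zagier} and then used as a black box (see Lemma~\ref{twistlogder} and the examples in Section~\ref{example}). So there is no ``paper's own proof'' to compare against.

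Your outline is essentially a reconstruction of Zagier's original argument: compute the logarithmic derivative of each side, identify the left with the generating series $\sum_{m\ge0}\mathbf{t}_{D,d}(m)q^m$ of twisted traces via the Faber generating function, expand the right to get the Dirichlet convolution $\sum_{n\mid N}nA(n^2D,d)c_D(N/n)$, match constant terms using the vanishing of $\sum_Q\chi(Q)$ for $D>1$, and then invoke the trace/duality identity $\mathbf{t}_{D,d}(m)=\sum_{n\mid m}nA(n^2D,d)c_D(m/n)$ from~\cite{Zagier} for the remaining step. That is exactly how the result is established in the cited reference, and the bookkeeping you describe (Gauss sum $c_D(1)=\sqrt D$, Hecke relations among the $j_m$, constant term $1+O(q)$ on both sides) is correct. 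You are right that the substantive content is the twisted trace identity itself, which you explicitly cite rather than reprove; with that granted, the rest of your argument is routine and correct.
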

\begin{remark}
When $D=1$, we often have that $\hcal_{D,d}(x)=\hcal_d(x)$. Also, note that one can recover Borcherds' result from
Theorem $\ref{Zag}$ with only minor adjustments. Lastly, note that the value of $A_{D,d}(n)$ in the introduction is equivalent to $A(n^2D,d)$.
\end{remark}

\section{Modular forms modulo $\ell$}
\label{prelim}
To compute formulas for the exponents $A(n^2D,d),$ we will take the logarithmic derivative of the twisted function $\hcal_{D,d}(x)$ and prove that its reduction modulo $\ell$ is a modular form of weight $\ell +1$. In order to do this, we must recall several preliminary facts about modular forms and their reductions modulo $\ell$.

We first recall that the Eisenstein series $E_{2k}(z)$ has $q$-series expansion
\[
E_{2k}(z)=1-\frac{4k}{B_{2k}}\sum_{n=1}^{\infty}\sigma_{2k-1}(n)q^n
\]
where $B_m$ is the $m^{th}$ Bernoulli number and $\sigma_m(n)=\sum_{d|n}d^m$. For $k>1,$ $E_{2k}(z)$ is a modular form of weight $2k$. 
%The Fourier expansions of the first three $E_{2k}(z)$ are
%\begin{align*}
%E_2(z)&=1-24\sum_{n=1}^{\infty}\sigma_1(n)q^n=1-24q-72q^2-96q^2-168q^3-144q^5-\cdots,\\
%E_4(z)&=1+240\sum_{n=1}^{\infty}\sigma_3(n)q^n=1+240q+2160q^2+6720q^3+17520q^4+\cdots,\\
%E_6(z)&=1-504\sum_{n=1}^{\infty}\sigma_5(n)q^n=1-504q-1663q^2-122976q^3-532728q^4+\cdots.
%\end{align*}
We will make use of the following fact about Eisenstein series which follows trivially from congruence properties of the Bernoulli numbers.

\begin{lemma}[See Lemma 1.22 in \cite{Ono}]\label{Emod}
Let $\ell$ be an odd prime. Then $E_{\ell-1}(z)\equiv 1 \pmod \ell$ and $E_{\ell+1}(z)\equiv E_2(z) \pmod \ell$.
\end{lemma}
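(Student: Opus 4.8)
The plan is to read both congruences off directly from the $q$-expansion
\[
E_{2k}(z)=1-\frac{4k}{B_{2k}}\sum_{n=1}^{\infty}\sigma_{2k-1}(n)q^n,
\]
using only two classical facts about Bernoulli numbers --- the von Staudt--Clausen theorem, which controls the $\ell$-adic denominators of the $B_{2k}$, and the Kummer congruences, which relate $B_m/m$ to $B_n/n$ when $m\equiv n\pmod{\ell-1}$ --- together with Fermat's little theorem for the divisor sums. For the first statement, take $2k=\ell-1$. Since $(\ell-1)\mid(\ell-1)$, von Staudt--Clausen gives $B_{\ell-1}+\frac1\ell+\sum_{p\neq\ell,\ (p-1)\mid(\ell-1)}\frac1p\in\Z$, so $\ell B_{\ell-1}$ is an $\ell$-adic unit congruent to $-1\pmod\ell$. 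Hence $\frac{4k}{B_{\ell-1}}=\frac{2(\ell-1)\,\ell}{\ell B_{\ell-1}}$ lies in $\ell\Z_{(\ell)}$, i.e.\ it vanishes mod $\ell$; as each $\sigma_{\ell-2}(n)$ is an integer, every nonconstant term of $E_{\ell-1}(z)$ dies mod $\ell$ and $E_{\ell-1}(z)\equiv 1\pmod\ell$.

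For the second statement, take $2k=\ell+1$ and first suppose $\ell\geq 5$, so that $\ell-1\nmid\ell+1$; then von Staudt--Clausen guarantees $B_{\ell+1}$ is $\ell$-integral. Since $\ell+1\equiv 2\pmod{\ell-1}$ and $2\not\equiv 0\pmod{\ell-1}$, the Kummer congruence gives $\frac{B_{\ell+1}}{\ell+1}\equiv\frac{B_2}{2}=\frac1{12}\pmod\ell$, whence $\frac{4k}{B_{\ell+1}}=\frac{2(\ell+1)}{B_{\ell+1}}\equiv 24\pmod\ell$. On the divisor-sum side, Fermat's little theorem yields $\sigma_\ell(n)=\sum_{d\mid n}d^\ell\equiv\sum_{d\mid n}d=\sigma_1(n)\pmod\ell$. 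Combining these,
\[
E_{\ell+1}(z)\equiv 1-24\sum_{n=1}^{\infty}\sigma_1(n)q^n=E_2(z)\pmod\ell.
\]
The leftover case $\ell=3$ is checked by hand: $E_4(z)=1+240\sum\sigma_3(n)q^n\equiv 1\pmod 3$ and $E_2(z)=1-24\sum\sigma_1(n)q^n\equiv 1\pmod 3$, so $E_4(z)\equiv E_2(z)\pmod 3$.

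The only genuine content lies in the two inputs on Bernoulli numbers; once those are granted the rest is bookkeeping. The point to watch is $\ell$-integrality: in the first congruence $B_{\ell-1}$ really does carry $\ell$ in its denominator, so one must keep track of the compensating factor of $\ell$ in $4k=2(\ell-1)$; in the second congruence one needs $\ell\geq 5$ precisely so that $B_{\ell+1}$ is $\ell$-integral before Kummer's congruence can be applied, which is why $\ell=3$ (where $\ell-1\mid\ell+1$) must be handled separately.
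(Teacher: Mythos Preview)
Your proof is correct and is exactly the argument the paper has in mind: the paper does not prove the lemma but simply states that it ``follows trivially from congruence properties of the Bernoulli numbers'' (citing Ono), and your use of von Staudt--Clausen for $E_{\ell-1}$ and the Kummer congruence for $E_{\ell+1}$, together with Fermat's little theorem on the divisor sums, is precisely that standard argument spelled out in detail. One tiny quibble of phrasing: in your closing remark, the ``compensating factor of $\ell$'' does not literally sit inside $4k=2(\ell-1)$ but rather appears when you multiply numerator and denominator by $\ell$ to make $\ell B_{\ell-1}$ a unit---the mathematics earlier in the proof is fine, only the summary sentence is slightly loose.
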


We can use the supersingular polynomial modulo $\ell$ to obtain congruences for $\hcal_d(x)$, since for most discriminants $d$ the two polynomials share many roots. 
An elliptic curve $E$ over $\overline{\mathbb{F}}_{\ell}$ is called \emph{supersingular} if the group $E(\overline{\mathbb{F}}_{\ell})$ has no $p$-torsion. The supersingular polynomial $s_{\ell}(x)$ is given by
\[
s_{\ell}(x)=\prod_{\underset{E/\overline{\mathbb{F}}_{\ell}}{ E\text{ supersingular}}}(x-j(E))
\]
where $j(E)$ is the $j$-invariant of the curve $E$.

Using this, we can give conditions for when we can write the logarithmic derivative of $\hcal_{D,d}(j(z))$ as an element of $\Mt_{\ell+1}$.

\begin{theorem}\label{main}
Let $-d$ and $D$ be fundamental discriminants such that $-Dd$ is fundamental, and let $\ell$ be prime such that $\hcal_{Dd}(x)|s_{\ell}(x)$ in $\Fl[x]$. Then we have
$$-\frac{(\hcal_{D,d}(j(z)))'}{\hcal_{D,d}(j(z))}\in \Mt_{\ell+1}.$$
\end{theorem}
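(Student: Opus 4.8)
The plan is to exploit the $j$-function's well-known compatibility with the weight-$\ell+1$ structure modulo $\ell$. Write $\hcal_{D,d}(j(z)) = \prod_Q (j(z) - j(\alpha_Q))^{\chi(Q)}$ as in (\ref{twists}). Then the logarithmic derivative is
\[
-\frac{(\hcal_{D,d}(j(z)))'}{\hcal_{D,d}(j(z))} = -\sum_{Q} \chi(Q)\,\frac{j'(z)}{j(z) - j(\alpha_Q)},
\]
where the sum runs over $Q_{Dd}/\SL_2(\Z)$ and $' = \frac{1}{2\pi i}\frac{d}{dz} = q\frac{d}{dq}$. The first step is to record the standard fact that $j'(z) = -E_{14}(z)/\Delta(z) = -E_4(z)^2 E_6(z)/\Delta(z)$, so $j'$ is a weakly holomorphic modular form of weight $2$, and that $j - j(\alpha_Q)$ has a simple zero at the $\SL_2(\Z)$-orbit of $\alpha_Q$ (with the usual ramification bookkeeping absorbed into $\omega_Q$, or irrelevant here since $-Dd$ fundamental means we are in the generic case). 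Hence each summand $j'/(j - j(\alpha_Q))$ is a weight-$2$ meromorphic modular form, holomorphic away from the point $\alpha_Q$ and the cusp, and the whole sum is a weight-$2$ meromorphic modular form on $\SL_2(\Z)$ whose only poles (away from $\infty$) lie among the $j(\alpha_Q)$.

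The second and central step is to clear the denominator using the supersingular hypothesis. Since $\hcal_{Dd}(x) \mid s_\ell(x)$ in $\Fl[x]$, every $j(\alpha_Q)$ reduces mod $\ell$ to a supersingular $j$-invariant, i.e.\ to a root of $s_\ell(x)$. The key classical input (due to Deligne, as recalled in Ono's book and in the Serre reference already cited) is that $s_\ell(j(z)) \equiv E_{\ell+1}(z)/\big(\text{appropriate power of } \Delta, E_4, E_6\big)$ — more precisely, $s_\ell(j(z))\cdot \Delta(z)^{\deg s_\ell}$ reduces mod $\ell$ to the weight-$(\ell-1)$ form $E_{\ell-1}(z) \equiv 1$ times a correction, and in any case the ``supersingular locus'' is cut out mod $\ell$ by the reduction of a modular form of weight $\ell - 1$. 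Multiplying the logarithmic derivative by $\hcal_{Dd}(j(z))\cdot\Delta^{\deg}$ and then by the cofactor $s_\ell/\hcal_{Dd}$ turns the expression into a genuine (weakly) holomorphic modular form; its weight is $2 + (\ell - 1) = \ell + 1$ after accounting for the weight-$(\ell-1)$ congruence $s_\ell(j(z))\sim E_{\ell-1}(z)^{?}$, and its reduction mod $\ell$ is holomorphic on the upper half-plane because all the finite poles have been cancelled by roots of $s_\ell$. One then checks holomorphy (and the correct vanishing order) at the cusp $\infty$ directly from the $q$-expansions: $-(\hcal_{D,d})'/\hcal_{D,d}$ has a $q$-expansion with integer coefficients and no pole at $q = 0$, by the product expansion of Theorem \ref{Zag}. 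Dorman's results on differences of singular moduli (\cite{Dorman}) enter precisely here, to control the $\ell$-integrality and the $\ell$-adic valuations of the $j(\alpha_Q) - j(\alpha_{Q'})$ and of the leading behavior, ensuring nothing degenerates mod $\ell$.

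The third step is bookkeeping: assemble the weight count and conclude that the reduction lies in $\Mt_{\ell+1}$, the space of mod-$\ell$ reductions of weight-$(\ell+1)$ holomorphic modular forms on $\SL_2(\Z)$ (using Lemma \ref{Emod} to identify $E_{\ell+1} \equiv E_2$ and $E_{\ell-1} \equiv 1$, which is what makes the weight-$2$ object of the first step live in weight $\ell + 1$ mod $\ell$ in the first place). The main obstacle I anticipate is the second step — specifically, verifying carefully that multiplying by the cofactor $s_\ell(j(z))/\hcal_{Dd}(j(z))$ together with the right power of $\Delta$ does not introduce a pole at the cusp and does not accidentally raise the weight beyond $\ell + 1$, and that the ``extra'' supersingular factors (roots of $s_\ell$ not among the $j(\alpha_Q)$) contribute only an invertible-mod-$\ell$ constant rather than genuine zeros that would need to be cancelled. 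This is where one must be most careful with $\ell$-integrality, and where Dorman's estimates do the real work; the rest is standard manipulation of $q$-expansions and the classical theory of modular forms mod $\ell$.
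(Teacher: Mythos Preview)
Your overall strategy matches the paper's: exhibit a holomorphic weight-$(\ell-1)$ form $M(z)$ with $M(z)\equiv 1\pmod{\ell}$ that vanishes at every root of $\hcal_{Dd}(j(z))$, so that $-\frac{(\hcal_{D,d}(j(z)))'}{\hcal_{D,d}(j(z))}\cdot M(z)$ is a genuine holomorphic form of weight $\ell+1$ with the same reduction mod $\ell$. But two parts of your write-up are off.

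First, the construction of $M(z)$ is left too vague to be a proof, and the version you sketch (``multiply by $s_\ell(j)/\hcal_{Dd}(j)$ and a power of $\Delta$'') does not quite work as stated, since a polynomial in $j$ has weight $0$ and an arbitrary power of $\Delta$ will not land you in weight $\ell-1$ with the right behavior at $j=0$ and $j=1728$. The paper makes this precise via the Kaneko--Zagier identity: writing $\ell-1=12m+4\delta+6\epsilon$ with $\delta\in\{0,1,2\}$, $\epsilon\in\{0,1\}$, one has $E_{\ell-1}(z)=\Delta^m(z)E_4^\delta(z)E_6^\epsilon(z)\,\widetilde{E}_{\ell-1}(j(z))$ and $s_\ell(j)\equiv\pm j^\delta(j-1728)^\epsilon\,\widetilde{E}_{\ell-1}(j)\pmod{\ell}$. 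Choosing $P(x)\in\Q[x]$ with $\hcal_{Dd}(x)P(x)\equiv\pm s_\ell(x)\pmod{\ell}$ and setting
\[
M(z)=\Delta^m(z)E_4^\delta(z)E_6^\epsilon(z)\,\frac{\hcal_{Dd}(j(z))P(j(z))}{j(z)^\delta(j(z)-1728)^\epsilon}
\]
gives $M(z)\equiv E_{\ell-1}(z)\equiv 1\pmod{\ell}$ directly. Holomorphy at $\infty$ is pure degree counting ($\deg s_\ell=m$), and holomorphy at $j=0,1728$ comes from the vanishing of $E_4,E_6$ there. With this in hand, your ``anticipated obstacle'' evaporates: the extra supersingular roots sit inside $M(z)$, which is $\equiv 1$ mod $\ell$ by construction, so they contribute nothing mod $\ell$ and there is no separate invertibility check to perform.

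Second, Dorman's results are \emph{not} used anywhere in this argument. The divisibility $\hcal_{Dd}(x)\mid s_\ell(x)$ in $\Fl[x]$ is a hypothesis of the theorem, so no $\ell$-integrality estimate on differences of singular moduli is needed to clear denominators. In the paper, Dorman's work enters only in the subsequent corollary, where one wants to \emph{derive} that divisibility from the conditions $\ell>Dd$, $\ell$ inert in $\Q(\sqrt{-Dd})$, and $\bigl(\tfrac{\ell}{Dd}\bigr)=1$: the point is that $\hcal_{Dd}(x)\mid s_\ell(x)^{h(-Dd)}$ always holds under the inertness condition, and Dorman's theorem rules out repeated roots of $\hcal_{Dd}$ mod $\ell$, upgrading this to $\hcal_{Dd}(x)\mid s_\ell(x)$. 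You have conflated the theorem with its corollary.
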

\begin{proof}
It is easy to see that the denominator of $-\frac{(\hcal_{D,d}(j(z)))'}{\hcal_{D,d}(j(z))}$ is exactly $\hcal_{Dd}(j(z))$. We want to construct a modular form $M(z)$ of weight $\ell-1$ that is congruent to $1$ modulo $\ell$ such that every pole introduced by the denominator of $\hcal_{Dd}(j(z))$ is canceled by a zero of $M(z)$. 

We write $\ell-1$ uniquely as $12m+4\del+6\eps$ where $m\in\Z_+$, $\delta\in\{0,1,2\}$, and $\eps\in\{0,1\}$. Write 
$$E_{\ell-1}(z)=\Del^m(z)E_4^{\del}(z)E_6^{\eps}(z)\Et_{\ell-1}(j(z))$$
where $\Et_{\ell-1}(j(z))$ is a polynomial in $j(z)$.
From Theorem~1 of \cite{Kaneko}, we have that
$$s_{\ell}(j(z))\equiv\pm j(z)^{\del}(j(z)-1728)^{\eps}\Et_{\ell-1}(j(z))\pmod \ell.$$
  Since we have assumed that $\hcal_{Dd}(x)|s_{\ell}(x)$ in $\Fl[x]$, we can 
construct $P(x)\in\Q[x]$ such that 
\[
\mathcal{H}_{Dd}(x)P(x)\equiv\pm s_{\ell}(x) \pmod \ell,
\]
 where the sign matches that of $s_{\ell}(j(z))$ above.

% i.e.\ we just set $P(x)$ to be $\pm\frac{s_{\ell}(x)}{x^{\del}(x-1728)^{\eps}}$, but with each root that is congruent to a root of $\frac{\hcal_{Dd}(x)}{x^{\del}(x-1728)^{\eps}}\pmod{\ell}$ replaced by the corresponding root of $\frac{\hcal_{Dd}(x)}{x^{\del}(x-1728)^{\eps}}$ over $\Q[x]$.

Define $M(z)$ to be the weight $\ell-1$ modular form given by
\[
M(z)=\Del^m(z)E_4^{\del}(z)E_6^{\eps}(z)\frac{\mathcal{H}_{Dd}(j(z))P(j(z))}{j(z)^{\del}(j(z)-1728)^{\eps}}.
\]
Then we can conclude that
$$M(z)\equiv\Del^m(z)E_4^{\del}(z)E_6^{\eps}(z)\Et_{\ell-1}(j(z))= E_{\ell-1}(z)\equiv 1\pmod \ell$$ by Lemma~\ref{Emod}.

Since $E_4(z)$ vanishes when $j(z)=0$ and $E_6(z)$ vanishes when $j(z)=1728$, we see that $M(z)=\Del^m(z)E_4^{\del}(z)E_6^{\eps}(z)\frac{\mathcal{H}_{Dd}(j(z))P(j(z))}{j(z)^{\del}(j(z)-1728)^{\eps}}$ vanishes at all the roots of $\hcal_{Dd}(j(z))$ to at least the same order. Also, $M(z)$ is holomorphic everywhere on the upper half-plane, including at infinity, since the factor $\mathcal{H}_{Dd}(j(z))P(j(z))$ has degree in $j(z)$ equal to the degree of $s_{\ell}$, which is $m$. Thus 
$$-\frac{(\hcal_{D,d}(j(z)))'}{\hcal_{D,d}(j(z))}M(z)$$
is a holomorphic modular form of weight $\ell+1$ that is congruent to $-\frac{(\hcal_{D,d}(j(z)))'}{\hcal_{D,d}(j(z))}\pmod \ell$, so we are done.
\end{proof}

The necessary condition of $\hcal_{D,d}(x)|s_{\ell}(x)$ in $\Fl[x]$ will occur for infinitely many pairs $(\ell,d)$. In fact, we can prove that the following conditions also imply the same result.
\begin{corollary}\label{equivcond}
Let $-d$ and $D$ be fundamental discriminants such that $-Dd$ is fundamental and let $\ell>Dd$ be a prime that is inert in $\Q(\sqrt{-Dd})$ and $\left(\frac{\ell}{Dd}\right)=1.$ Then we have
$$-\frac{(\hcal_{D,d}(j(z)))'}{\hcal_{D,d}(j(z))}\in \Mt_{\ell+1}.$$
\end{corollary}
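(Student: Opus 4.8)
The plan is to reduce Corollary~\ref{equivcond} to Theorem~\ref{main} by showing that, under the stated hypotheses on $-d$, $D$, and $\ell$, the divisibility $\hcal_{Dd}(x)\mid s_\ell(x)$ holds in $\Fl[x]$. The key arithmetic input is the classical theory of singular moduli and supersingular reduction of CM elliptic curves due to Deuring. Concretely, each root $j(\alpha_Q)$ of $\hcal_{Dd}(x)$ is the $j$-invariant of an elliptic curve with complex multiplication by an order in $\Q(\sqrt{-Dd})$, and Deuring's criterion says that such a curve has supersingular reduction at a prime above $\ell$ precisely when $\ell$ is inert (or ramified) in the CM field. So the hypothesis that $\ell$ is inert in $\Q(\sqrt{-Dd})$ should force every $j(\alpha_Q)\bmod\ell$ (i.e.\ in a suitable extension of $\Fl$) to be a supersingular $j$-invariant, hence a root of $s_\ell(x)$.

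First I would recall that since $-Dd$ is a fundamental discriminant, $\hcal_{Dd}(x)$ is (up to the usual normalizations) the minimal polynomial over $\Q$ of a singular modulus $j(\alpha_Q)$, an algebraic integer generating the ring class field; and that $s_\ell(x)\in\Fl[x]$ is separable with roots exactly the supersingular $j$-invariants in $\overline{\Fl}$. To get a divisibility of polynomials over $\Fl$ it suffices to show that $\hcal_{Dd}(x)\bmod\ell$ is separable and that each of its roots in $\overline{\Fl}$ is supersingular. Separability of $\hcal_{Dd}(x)\bmod\ell$ should follow from the condition $\ell>Dd$: the discriminant of $\hcal_{Dd}$ is built out of differences of singular moduli, whose prime factorization is controlled by the Gross--Zagier formula on singular moduli, and primes dividing it are bounded by $Dd$ (this is also where Dorman's work, cited in the introduction, enters the larger argument). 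Then I would invoke Deuring's reduction theorem: reducing the CM elliptic curve $E$ with $j(E)=j(\alpha_Q)$ modulo a prime $\mathfrak{l}$ of the ring class field above $\ell$ yields an elliptic curve over $\overline{\Fl}$ which is supersingular iff $\ell$ does not split in $\Q(\sqrt{-Dd})$; our hypothesis that $\ell$ is inert gives exactly this. Hence every root of $\hcal_{Dd}(x)\bmod\ell$ is a root of $s_\ell(x)$, and by separability $\hcal_{Dd}(x)\mid s_\ell(x)$ in $\Fl[x]$, so Theorem~\ref{main} applies directly.

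The role of the extra hypothesis $\left(\frac{\ell}{Dd}\right)=1$ is more subtle and I would expect it to be the main technical point rather than the Deuring input, which is essentially a black box. This Kronecker-symbol condition is presumably what guarantees that $\hcal_{Dd}(x)\bmod\ell$ actually splits over $\Fl$ into the same linear factors appearing in $s_\ell$ with the correct behavior at the ``special'' $j$-invariants $0$ and $1728$ — i.e.\ it ensures compatibility of $\hcal_{Dd}$ with the factor $j(z)^\delta(j(z)-1728)^\eps$ that was divided out in the proof of Theorem~\ref{main}, or equivalently controls whether the CM curves can have $j$-invariant $0$ or $1728$. I would check carefully that no root of $\hcal_{Dd}$ coincides mod $\ell$ with $0$ or $1728$ unless the quadratic-residue condition permits it, so that the polynomial $P(x)$ in the proof of Theorem~\ref{main} remains a genuine polynomial in $\Fl[x]$ with the right degree. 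Once these compatibility checks are in place, the corollary follows from Theorem~\ref{main} with no further work.

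Alternatively, one can bypass the polynomial-divisibility reformulation entirely and argue directly with orders: since $\ell>Dd\ge|\mathrm{disc}|$, the prime $\ell$ does not divide the conductor of any relevant order, so Deuring's theorem applies uniformly to all the CM points at once, and the inertness hypothesis makes the reduction map send the divisor of $\hcal_{Dd}$ into the supersingular locus; the residue-symbol hypothesis then pins down the precise local structure at $0$ and $1728$. Either way, the hard part is not the geometry of CM reduction but the bookkeeping that translates ``$\ell$ inert and $\left(\frac{\ell}{Dd}\right)=1$'' into the clean polynomial statement $\hcal_{Dd}(x)\mid s_\ell(x)$ in $\Fl[x]$ required by Theorem~\ref{main}.
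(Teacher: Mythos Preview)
Your overall strategy matches the paper's: reduce to Theorem~\ref{main} by verifying $\hcal_{Dd}(x)\mid s_\ell(x)$ in $\Fl[x]$, arguing first that every root of $\hcal_{Dd}$ reduces to a supersingular $j$-invariant (Deuring, using that $\ell$ is inert in $\Q(\sqrt{-Dd})$; the paper quotes this as Theorem~7.25 of \cite{Ono}, which gives $\hcal_{Dd}(x)\mid s_\ell(x)^{h(-Dd)}$), and then that $\hcal_{Dd}(x)\bmod\ell$ is separable.

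The gap is in your separability argument and in your assignment of roles to the hypotheses. You claim separability follows from $\ell>Dd$ via the Gross--Zagier formula, but Gross--Zagier controls the primes dividing $\prod(j(\tau_1)-j(\tau_2))$ for CM points of \emph{distinct} discriminants; it says nothing about $\mathrm{disc}\,\hcal_{Dd}$, which involves differences of singular moduli of the \emph{same} discriminant. That is precisely the content of Dorman's paper~\cite{Dorman}, and the criterion there is not a size bound but a Kronecker-symbol condition: $\hcal_{Dd}(x)$ can acquire a repeated root modulo $\ell$ only when $\left(\frac{\ell}{Dd}\right)\neq 1$ (Corollary~5.6 of \cite{Dorman}). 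So the hypothesis $\left(\frac{\ell}{Dd}\right)=1$ is exactly what forces separability, and your speculation that it instead governs behavior at $j=0,1728$ is a red herring; those points are already handled inside the proof of Theorem~\ref{main} by the factors $E_4^\delta E_6^\eps$ and require no extra hypothesis here. Once you swap in Dorman's criterion for the Gross--Zagier appeal, your argument becomes the paper's.
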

\begin{proof}
From Theorem 7.25 in \cite{Ono}, we have that $$\hcal_{Dd}(x)|s_{\ell}(x)^{h(-Dd)}$$ in $\Fl[x]$, so every root of $\hcal_{Dd}(x)$ is also a root of $s_{\ell}(x)$. Dorman proves in Corollary~5.6 in \cite{Dorman} (see also \cite{Gross}) that the only primes $\ell$ for which $\hcal_{Dd}(x)$ can have a repeated root modulo $\ell$ are those where $\left(\frac{\ell}{Dd}\right)\neq1,$ which is not the case by assumption. Thus every root of $\hcal_{Dd}(x)$ has multiplicity $1$, implying 
$$\hcal_{Dd}(x)|s_{\ell}(x),$$ and by Theorem~\ref{main}, the result follows.
\end{proof}
We wish to massage this result into a form similar to what we had mentioned in the introduction; namely, that we can write the logarithmic derivative of $\hcal_d(j(z))$ as a multiple of $E_2(z)$ plus several cusp forms that form a basis of $S_{\ell+1}$. There is a correspondence between elements of $S_{\ell+1}$ and elements of $S_2(\Gamma_0(\ell))$ which we illustrate here.
\begin{lemma}\label{serrelemma}
An element $M(z)$ of $\Mt_{\ell+1}$ can be written uniquely modulo $\ell$ as the sum of a constant multiple of $E_2(z)$ and an element of $S_2(\Gamma_0(\ell))$.
\end{lemma}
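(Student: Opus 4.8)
The plan is to exploit the structure of the graded ring of modular forms modulo $\ell$ together with the classical fact about the weight filtration of forms that are $\equiv 0 \pmod \ell$. First I would recall that $\dim_{\C} M_{\ell+1} = r+1$, where $r = \dim_{\C} S_{\ell+1}$, with a basis given by the Eisenstein series $E_{\ell+1}$ together with the normalized Hecke eigenforms $f_1, \dots, f_r$ spanning $S_{\ell+1}$. Given $M(z) \in \widetilde{M}_{\ell+1}$, lift it to a characteristic-zero form $\widehat{M} \in M_{\ell+1}$ with the same $q$-expansion mod $\ell$, and write $\widehat{M} = c\, E_{\ell+1} + g$ with $g \in S_{\ell+1}$. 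By Lemma~\ref{Emod} we have $E_{\ell+1}(z) \equiv E_2(z) \pmod \ell$, so reducing mod $\ell$ gives $M(z) \equiv c\, E_2(z) + g(z) \pmod \ell$, which is the desired decomposition once we note $S_2(\Gamma_0(\ell))$ receives $S_{\ell+1}$ via Serre's correspondence — or, more elementarily for the existence half, simply read ``an element of $S_2(\Gamma_0(\ell))$'' as the reduction $\overline{g}$, whose $q$-expansion is that of a weight-$2$ cusp form on $\Gamma_0(\ell)$ by Theorem~11 of~\cite{Serre}.

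For uniqueness, suppose $c_1 E_2 + h_1 \equiv c_2 E_2 + h_2 \pmod \ell$ with $h_i$ (reductions of) cusp forms. Then $(c_1 - c_2) E_2 \equiv h_2 - h_1 \pmod \ell$. If $c_1 \neq c_2$ in $\Fl$, then $E_2$ itself would be congruent mod $\ell$ to a cusp form, hence its constant term $1$ would vanish mod $\ell$ — impossible. So $c_1 = c_2$, and then $h_1 \equiv h_2 \pmod \ell$, giving uniqueness of the cuspidal part as well. The one subtlety is that uniqueness is being asserted at the level of $q$-expansions mod $\ell$ (i.e. in $\widetilde{M}_{\ell+1}$), not at the level of characteristic-zero lifts, which avoids any issue with the kernel of reduction.

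The main obstacle is the filtration subtlety: a priori an element of $\widetilde{M}_{\ell+1}$ need not be the reduction of a holomorphic weight-$\ell+1$ form — it could come from a higher weight form that happens to be $\equiv 0 \pmod\ell$ after dividing by a power of $E_{\ell-1} \equiv 1$. I would handle this by invoking the standard fact (a consequence of the theory of the mod-$\ell$ weight filtration, or directly of the structure of $\bigoplus_k \widetilde{M}_k$ as in~\cite{Serre} or Lemma~1.22-adjacent material in~\cite{Ono}) that every $q$-expansion in $\widetilde{M}_{\ell+1}$ is congruent mod $\ell$ to the $q$-expansion of an actual element of $M_{\ell+1}$; since $\ell+1 < \ell - 1 + (\ell+1)$ and $\ell+1$ is below the first weight where filtration-lowering by $E_{\ell-1}$ can occur nontrivially for weight $\ell+1$, this is clean. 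Once that reduction-surjectivity is in hand, the decomposition and its uniqueness follow immediately from the basis $\{E_{\ell+1}, f_1, \dots, f_r\}$ of $M_{\ell+1}$ and the congruence $E_{\ell+1} \equiv E_2 \pmod\ell$.
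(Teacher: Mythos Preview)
Your proposal is correct and takes essentially the same approach as the paper: both subtract off the constant-term multiple of $E_{\ell+1}\equiv E_2\pmod\ell$ (Lemma~\ref{Emod}) and then invoke Theorem~11 of~\cite{Serre} to identify the cuspidal remainder with an element of $S_2(\Gamma_0(\ell))$. Your filtration discussion is unnecessary here, since the paper defines $\Mt_{\ell+1}$ to be the reduction of $M_{\ell+1}$ (so a characteristic-zero lift exists by definition), but this does no harm.
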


\begin{proof}
Let $c$ be the constant coefficient of $M(z)$. Then $c$ is the unique element of $\Fl$ so that $M(z)-cE_{\ell+1}(z)\in \St_{\ell+1}$. We have from Lemma~\ref{Emod} that $E_{\ell+1}(z)\equiv E_2(z)\pmod{\ell}$, and by Theorem 11 of~\cite{Serre} that $M(z)-cE_{\ell+1}(z)$ corresponds modulo ${\ell}$ to an element of $S_2(\Gamma_0(\ell))$.
\end{proof}
We are now prepared to prove the types of congruences that we see in Theorem~\ref{formula}.

\section{Proofs of Congruences}
\label{proof}

We have shown that the logarithmic derivative of $\hcal_{D,d}(j(z))$ modulo $\ell$, which we can compute using Theorem~\ref{Zag}, is an element of $\Mt_{\ell+1}$, the reduction modulo $\ell$ of weight $\ell+1$ holomorphic modular forms with integer coefficients. In order to prove Theorem~\ref{formula}, we just need to solve for the exponents of $\hcal_{D,d}(j(z))$.

\begin{lemma}\label{twistlogder}Let $\ell$ be prime, $D>0$ and $-d<0$ be fundamental discriminants, and $g(n)$ be defined by $$-\frac{(\hcal_{D,d}(j(z)))'}{\hcal_{D,d}(j(z))}\equiv\sum_{n=0}^{\infty}g(n)q^n\pmod{\ell}.$$ Then for all $n$ not divisible by $\ell$,
$$A(n^2D,d) \equiv\frac{1}{n}\sum\limits_{m|n}\nu(m)g\left(\frac nm\right)\pmod \ell$$
where $\nu(m)$ is the Dirichlet inverse of $\sum\limits_{k=1}^{D-1}\left(\frac Dk\right)\zeta_D^{km}$.
\end{lemma}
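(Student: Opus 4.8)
The plan is to compute the $q$-expansion of $-\frac{(\hcal_{D,d}(j(z)))'}{\hcal_{D,d}(j(z))}$ explicitly from Zagier's product expansion, and then to recover the exponents $A(n^2D,d)$ from the coefficients $g(n)$ by Dirichlet inversion.

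First I would apply Theorem~\ref{Zag} (or Borcherds' product in the case $D=1$) to write $\hcal_{D,d}(j(z))=\prod_{n\ge 1}P_D(q^n)^{A(n^2D,d)}$, take a logarithm, and expand $\log(1-\zeta_D^kq^n)=-\sum_{j\ge 1}\zeta_D^{kj}q^{nj}/j$. Applying the operator $\frac{1}{2\pi i}\frac{d}{dz}=q\frac{d}{dq}$ — the normalization of $'$ that makes the output $\ell$-integral, exactly as in Theorem~\ref{main} — and collecting the coefficient of $q^N$ for $N\ge 1$ gives
\[
g(N)=\sum_{n\mid N}n\,A(n^2D,d)\,c(N/n),\qquad\text{where } c(m):=\sum_{k=1}^{D-1}\left(\frac Dk\right)\zeta_D^{km}.
\]
In other words, $g$ is the Dirichlet convolution of $c$ with the arithmetic function $n\mapsto n\,A(n^2D,d)$.

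Next, since $c(1)=\sum_{k=1}^{D-1}\left(\frac Dk\right)\zeta_D^{k}$ is a nonzero Gauss sum (it equals $\sqrt D$ for fundamental $D>1$, and is $1$ in the degenerate case $D=1$), the function $c$ has a Dirichlet inverse, which is precisely the $\nu$ of the statement. Convolving the displayed identity with $\nu$, and using that $\nu*c$ is the identity for Dirichlet convolution, yields
\[
n\,A(n^2D,d)=\sum_{m\mid n}\nu(m)\,g(n/m)
\]
as an identity of algebraic numbers, the left-hand side being an ordinary rational integer. Reducing modulo $\ell$ and dividing by $n$ — which is legitimate precisely because $\ell\nmid n$ — gives the asserted congruence.

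The computation is essentially formal, so there is no serious obstacle; the points that require care are fixing the normalization of the logarithmic derivative so that the factor $n$ lands on the correct side of the convolution, checking that $c(1)\neq 0$ so that $\nu$ exists, and observing that although the individual summands $\nu(m)g(n/m)$ a priori lie in $\Q(\zeta_D)$, their sum is the integer $n\,A(n^2D,d)$ and hence reduces to an element of $\Z/\ell\Z$.
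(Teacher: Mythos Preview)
Your proposal is correct and follows essentially the same route as the paper: expand the logarithmic derivative of Zagier's product to obtain $g(N)=\sum_{n\mid N}n\,A(n^2D,d)\,c(N/n)$ as a Dirichlet convolution, use the nonvanishing of the Gauss sum $c(1)$ to invert, and divide by $n$ modulo $\ell$. Your additional remark that the individual terms lie in $\Q(\zeta_D)$ while their sum is a rational integer is a nice point that the paper leaves implicit.
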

\begin{proof}
From Theorem~\ref{Zag} we can compute
\begin{align*}
-\frac{(\hcal_{D,d}(j(z)))'}{\hcal_{D,d}(j(z))} &= -q\sum_{n=1}^{\infty}A(n^2D,d)\frac{d}{dq}\log P_D(q^n)\\
&= -q\sum_{n=1}^{\infty}A(n^2D,d)\sum_{k=1}^{D-1}\left(\frac Dk\right)\frac{-\zeta_D^k nq^{n-1}}{1-\zeta_D^kq^n}\\
&=\sum_{n=1}^{\infty}nA(n^2D,d)\sum_{k=1}^{D-1}\left(\frac Dk\right)(\zeta_D^kq^n+\zeta_D^{2k}q^{2n}+\zeta_D^{3k}q^{3n}+\dotsb)\\
&=\sum_{n=1}^{\infty}\sum_{m|n}mA(m^2D,d)\sum_{k=1}^{D-1}\left(\frac Dk\right)\zeta_D^{kn/m}q^n.
\end{align*}
Write $f_1(m)=mA(m^2D,d)$ and $f_2(m)=\sum_{k=1}^{D-1}\left(\frac Dk\right)\zeta_D^{km}$. Then 
\[
f_1*f_2=\sum_{m|n}mA(m^2D,d)\sum_{k=1}^{D-1}\left(\frac Dk\right)\zeta_D^{kn/m},
\]
where $*$ is Dirichlet convolution. From the theory of Gauss sums (see~\cite{Ireland}) we know that $f_2(1)\neq 0$, and so $f_2$ has a Dirichlet inverse $\nu$. Hence, since $\ell\nmid n,$ given a congruence of the form $-\frac{(\hcal_{D,d}(j(z)))'}{\hcal_{D,d}(j(z))}\equiv\sum\limits_{n=0}^{\infty}g(n)q^n\pmod{\ell}$ we can convolve $\nu$ with both sides to compute
\begin{align*}
\sum_{m|n}mA(m^2D,d)\sum_{k=1}^{D-1}\left(\frac Dk\right)\zeta_D^{kn/m} &\equiv g(n)\pmod \ell\\
nA(n^2D,d) &\equiv \sum_{m|n}\nu(m)g\left(\frac nm\right)\pmod \ell\\
A(n^2D,d) &\equiv\frac{1}{n}\sum\limits_{m|n}\nu(m)g\left(\frac nm\right)\pmod \ell.
\end{align*}
\end{proof}

This allows us to prove, among other things, the trivial congruences of $A(n^2,d)$ modulo $5$, $7$, and $13$ mentioned in the introduction.
\begin{example}\label{smallcong}
If $\ell\in\{5,7,13\}$ and $-d$ is a negative fundamental discriminant satisfying the conditions of Theorem~\ref{main} for $D=1$, then we have
$$A(n^2,d)\equiv-24h(d)\pmod \ell.$$
\end{example}
\begin{proof}
The assumed conditions let us use Theorem~\ref{main} to write $-\frac{(\hcal_{D,d}(j(z)))'}{\hcal_{D,d}(j(z))}$ as\\ 
$h(d)E_{\ell+1}$ plus some element of $S_{\ell+1}$. However, for the primes $5$, $7$, and $13$, the dimension of $S_{\ell+1}$ is $0$. Thus using Lemma~\ref{Emod}, we obtain 
\begin{eqnarray*}
-\frac{(\hcal_{D,d}(j(z)))'}{\hcal_{D,d}(j(z))}&\equiv& h(d)E_{\ell+1}\pmod \ell\\
&\equiv& h(d)E_2 \pmod \ell\\
&\equiv& h(d)-\sum_{n=1}^{\infty}24h(d)\sigma_1(n)q^n\pmod \ell.
\end{eqnarray*}
Thus by Lemma~\ref{twistlogder} with $D=1$ and the fact that $n=\sum_{m|n}\mu(m)\sigma_1(\frac{n}{m})$, we can conclude the desired identity.
\end{proof}

Lemma~\ref{twistlogder} allows us to complete the proof of Theorem~\ref{camain}.
\begin{proof}[Proof of Theorem~\ref{camain}]
Let $\nu(m)$ be the Dirichlet inverse of $\sum\limits_{k=1}^{D-1}\left(\frac Dk\right)\zeta_D^{km}$ as before. Since $\ell\nmid n$, Lemma~\ref{twistlogder} implies that $$A(n^2D,d)\equiv\frac{1}{n}\sum\limits_{m|n}\nu\left(\frac nm\right)g\left( m\right)\pmod \ell.$$
To find $g(m),$ we use the assumption that $\hcal_{Dd}(x)|s_{\ell}(x)$ (implied by the assumed conditions by the argument in Corollary~\ref{equivcond}) in $\Fl[x]$ to invoke Theorem~\ref{main}, implying that we can write $\sum\limits_{n=0}^{\infty}g(n)q^n$ as an element of $\Mt_{\ell+1}$. By Lemma~\ref{serrelemma}, this element can be represented as $c_0E_{\ell+1}$ plus an element of $\tilde{S}_{\ell+1}$. For some choice of size $r$ basis of cusp forms for $S_{\ell+1}$ with expansions $\sum\limits_{n=1}^{\infty}a_i(n)q^n$, we can use $E_{\ell+1}\equiv E_2\pmod {\ell}$ by Lemma~\ref{Emod} to imply that 
$$g(n)\equiv-24c_0\sigma_1(n)+c_1a_1(n)+c_2a_2(n)+\cdots+c_ra_r(n)\pmod {\ell}.$$ The desired result follows.
\end{proof}

\section{Applications of the Chebotarev Density Theorem}
\label{example}
From \cite{Ono}, we have the following corollary of a theorem of Deligne regarding Galois representations associated to certain modular forms.
\begin{theorem}
\label{formtorep}
Let $f(z)=\sum\limits_{n=1}^\infty a(n)q^n\in S_k^{\text{new}}(\Gamma_0(N),\chi)$ be a newform, and let $K_f$ be the number field obtained by adjoining the Fourier coefficients $a(n)$ and the values of $\chi$ to $\mathbb{Q}.$ Let $\ell$ be any prime, $K$ any finite extension of $\mathbb{Q}$ containing $K_f$, and $\mathfrak{p}_{\ell,K}$ a prime ideal of $\mathcal{O}_K$ dividing $\ell$. Then there is a continuous semisimple representation
\[
\rho_{f,\ell}:\gal(\overline{\mathbb{Q}}/\mathbb{Q})\to \GL_2(\mathbb{F}_{\ell,K})
\]
for which the following are true:
\begin{enumerate}
\item We have that $\rho_{f,\ell}$ is unramified at all primes $p\nmid N\ell.$
\item For every prime $p\nmid N\ell$ we have
\[
\tr(\rho_{f,\ell}(\frob_p))\equiv a(p)\pmod{\mathfrak{p}_{\ell,K}}.
\]
\item For every prime $p\nmid N\ell$ we have
\[
\det(\rho_{f,\ell}(\frob_p))\equiv\chi(p)p^{k-1}\pmod{\mathfrak{p}_{\ell,K}}.
\]
\item For any complex conjugation $c$, we have
\[
\det\rho_{f,\ell}(c)=-1.
\]
Here $\frob_p$ denotes any Frobenius element for the prime $p$.
\end{enumerate}
\end{theorem}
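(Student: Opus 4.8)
The plan is to deduce this statement from Deligne's construction of $\ell$-adic Galois representations attached to modular forms by a standard lattice-reduction argument; since Deligne's theorem is far beyond the scope of this note, I would cite it (as in~\cite{Ono}) and carry out only the passage to characteristic $\ell$.

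First I would invoke the $\lambda$-adic form of Deligne's theorem: for the newform $f=\sum a(n)q^n$ of weight $k\geq 2$ on $\Gamma_0(N)$ with character $\chi$, and for any prime $\lambda$ of $K_f$ above $\ell$, there is a continuous representation $\rho_{f,\lambda}\colon\gal(\overline{\Q}/\Q)\to\GL_2(K_{f,\lambda})$, unramified at every $p\nmid N\ell$, for which the characteristic polynomial of $\frob_p$ at such a $p$ is $X^2-a(p)X+\chi(p)p^{k-1}$. I would recall only that this comes from isolating the $f$-isotypic piece of the $\ell$-adic \'etale cohomology of a Kuga--Sato variety over the modular curve of level $N$ (the $\ell$-adic Tate module of a factor of the Jacobian of $X_1(N)$ when $k=2$), with the Eichler--Shimura congruence relation identifying the action of $\frob_p$ with that of the Hecke operator $T_p$.

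Next I would pass to characteristic $\ell$: choose a $\gal(\overline{\Q}/\Q)$-stable $\mathcal{O}_{K_{f,\lambda}}$-lattice in the two-dimensional representation space---possible because $\gal(\overline{\Q}/\Q)$ is compact---and reduce it modulo the maximal ideal to obtain a representation valued in $\GL_2$ of the residue field. The reduction can depend on the chosen lattice, but its semisimplification does not, by the Brauer--Nesbitt theorem, so I would take $\rho_{f,\ell}$ to be that semisimplification and then extend scalars along the embedding of the residue field of $K_f$ at $\lambda$ into $\mathbb{F}_{\ell,K}=\mathcal{O}_K/\mathfrak{p}_{\ell,K}$ cut out by $\mathfrak{p}_{\ell,K}$. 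Continuity is free (finite image, hence open kernel), and reducing the characteristic polynomial of $\frob_p$ modulo $\mathfrak{p}_{\ell,K}$ gives (1), (2), and (3) simultaneously. For (4), I would use that the $\lambda$-adic determinant of $\rho_{f,\lambda}$ is $\chi\cdot\chi_{\mathrm{cyc}}^{k-1}$ with $\chi_{\mathrm{cyc}}$ the cyclotomic character; since a complex conjugation $c$ has $\chi_{\mathrm{cyc}}(c)=-1$ and the nonvanishing of $S_k(\Gamma_0(N),\chi)$ forces $\chi(-1)=(-1)^k$, this gives $\det\rho_{f,\ell}(c)=\chi(-1)(-1)^{k-1}=-1$ (equivalently, $c$ acts on the underlying cohomology with eigenvalues $+1$ and $-1$).

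The main obstacle is the very first step: Deligne's construction of $\rho_{f,\lambda}$ is a deep theorem whose proof uses the full machinery of \'etale cohomology of modular varieties, and I would not attempt to reprove it---everything after it is the routine lattice-reduction-and-semisimplification bookkeeping sketched above.
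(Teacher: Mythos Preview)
Your sketch is correct and is exactly the standard way one extracts the residual representation from Deligne's $\lambda$-adic construction: choose a stable lattice by compactness, reduce, semisimplify (invoking Brauer--Nesbitt for well-definedness), and read off the trace and determinant from the characteristic polynomial of Frobenius; the oddness in (4) follows from $\det\rho_{f,\lambda}=\chi\cdot\chi_{\mathrm{cyc}}^{k-1}$ together with $\chi(-1)=(-1)^k$.

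However, you should know that the paper does not prove this theorem at all. It is stated in Section~\ref{example} as a quotation of a known result---introduced with the sentence ``From \cite{Ono}, we have the following corollary of a theorem of Deligne\ldots''---and is used as a black box. So there is no ``paper's own proof'' to compare against: your proposal already contains strictly more argument than the paper provides, and what you have written is the standard justification that the cited reference would give.
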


We will only be concerned with the case of modular forms with rational integer Fourier coefficients and $\chi$ trivial. For $\ell=11,$ $17,$ or $19$ and $-d<0$ satisfying the hypothesis of Corollary ~\ref{equivcond}, in the case that the Galois representation for the cusp form furnished by Lemma ~\ref{serrelemma} surjects onto $\GL_2(\mathbb{F}_\ell)$, the Chebotarev Density Theorem tells us that every pair $(p,a(p))$ in $\mathbb{F}_{\ell}^{\times}\times\mathbb{F}_{\ell}$ occurs. Further, it yields the precise densities of each ordered pair $(p,a(p))$ in the space of possible ordered pairs. Our goal in the first subsection is to compute the relative sizes of unions of conjugacy class in $\GL_2(\mathbb{F}_\ell)$ corresponding to each $(p,a(p))$ modulo ${\ell}.$ Applying the Chebotarev Density Theorem yields our desired result. We then apply the result to the example where $d=4$ and $\ell=11.$
\subsection{Characteristic polynomial frequencies in $\GL_2(\Fl)$}
It will be useful to have a formula for the proportion of elements in $\GL_2(\mathbb{F}_\ell)$ with a given trace and determinant.

\begin{lemma}
\label{dens}
Of the $(\ell^2-1)(\ell^2-\ell)$ elements of $\GL_2(\Fl)$, the proportion with characteristic polynomial $x^2-ax+b$ (that is, trace $a$ and determinant $b$), for $a\in\Fl$ and $b\in\Fl^{\times}$, are as follows:
\begin{align}
\begin{cases}
%\frac{1}{(\ell-1)(\ell+1)} & \text{ if } a=0 \text{ and } \left(\frac{-b}p\right)=-1\\
%\frac{1}{(\ell-1)^2} & \text{ if } a=0 \text{ and } \left(\frac{-b}p\right)=1\\
\frac{1}{(\ell-1)(\ell+1)} & \text{ if } %a\neq0 \text{ and } 
\left(\frac{-b+a^2/4}{\ell}\right)=-1\\ %-b\not\equiv c(a+c) \text{ for any } c\in\Fl\\
\frac{1}{(\ell-1)^2} & \text{ if } %a\neq0 \text{ and } 
\left(\frac{-b+a^2/4}{\ell}\right)=1\\%-b\equiv c(a+c) \text{ for exactly two values of } c\in\Fl\\
\frac{\ell}{(\ell-1)^2(\ell+1)} & \text{ if } %a\neq0 \text{ and } 
\left(\frac{-b+a^2/4}{\ell}\right)=0.%-b\equiv c(a+c) \text{ for exactly one value of }c\in\Fl.
\end{cases}
\end{align}
\end{lemma}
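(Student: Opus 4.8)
The plan is to count the matrices directly rather than to pass through conjugacy classes. Writing a general element of $\GL_2(\Fl)$ as $\sm{p}{q}{r}{s}$, the requirements that the trace be $a$ and the determinant be $b$ become $s = a - p$ and $qr = p(a-p) - b$. So, setting $f(p) := p(a-p) - b = -(p^2 - ap + b)$, the number of elements of $\GL_2(\Fl)$ with characteristic polynomial $x^2 - ax + b$ equals $\sum_{p \in \Fl} N(f(p))$, where $N(c)$ denotes the number of pairs $(q,r) \in \Fl^2$ with $qr = c$. An immediate count gives $N(c) = \ell - 1$ for $c \in \Fl^{\times}$ (choose $q \in \Fl^{\times}$ freely, then $r$ is forced) and $N(0) = 2\ell - 1$ (the pairs with $q = 0$ together with those with $r = 0$); and since $b \in \Fl^{\times}$, each such matrix is automatically invertible, so there is nothing further to check.

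Next I would count the $p \in \Fl$ with $f(p) = 0$: these are exactly the roots in $\Fl$ of $x^2 - ax + b$, whose discriminant is $a^2 - 4b$. Since $4$ is a square modulo the odd prime $\ell$, one has $\left(\frac{a^2-4b}{\ell}\right) = \left(\frac{-b + a^2/4}{\ell}\right)$, so the three cases in the statement correspond precisely to $x^2 - ax + b$ having two, no, or one root in $\Fl$, i.e.\ to there being two, zero, or one value of $p$ with $f(p) = 0$. Assembling the sum in each case: if $\left(\frac{-b+a^2/4}{\ell}\right) = 1$ the total is $2(2\ell - 1) + (\ell - 2)(\ell - 1) = \ell(\ell + 1)$; if it is $-1$ the total is $\ell(\ell - 1)$; and if it is $0$ the total is $(2\ell - 1) + (\ell - 1)^2 = \ell^2$. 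Dividing by $|\GL_2(\Fl)| = (\ell^2 - 1)(\ell^2 - \ell) = \ell(\ell - 1)^2(\ell + 1)$ and simplifying turns these into $\frac{1}{(\ell-1)^2}$, $\frac{1}{(\ell-1)(\ell+1)}$, and $\frac{\ell}{(\ell-1)^2(\ell+1)}$ respectively, which is the claim.

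There is no real obstacle here: the argument is elementary, and the only points requiring any care are the evaluation $N(0) = 2\ell - 1$ and the observation that the Legendre symbol $\left(\frac{-b+a^2/4}{\ell}\right)$ records the splitting type of $x^2 - ax + b$ over $\Fl$. As an alternative one could organize the count around conjugacy classes — a single split-semisimple class of size $\ell(\ell+1)$, a single irreducible (non-split) class of size $\ell(\ell-1)$, and, when the discriminant vanishes, the central element together with the unique non-semisimple class, of sizes $1$ and $\ell^2 - 1$ — via the orbit–stabilizer theorem and the orders of the corresponding centralizers (the diagonal torus of order $(\ell-1)^2$, a non-split torus of order $\ell^2-1$, and the group of scalars times upper unipotents of order $\ell(\ell-1)$). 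This reproduces the same three totals but with somewhat more bookkeeping.
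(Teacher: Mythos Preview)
Your proof is correct and follows essentially the same approach as the paper: both parametrize matrices of trace $a$ by their upper-left entry and off-diagonal entries, reduce the determinant condition to an equation of the form $qr = (\text{quadratic in the diagonal parameter})$, and split into cases according to how many values of the parameter make the right-hand side vanish. The only cosmetic difference is that the paper writes the diagonal as $(a+c,-c)$ and completes the square to get $mn = -b + a^2/4 - (c+a/2)^2$, whereas you write it as $(p,a-p)$ and recognize $f(p) = -(p^2 - ap + b)$ directly; the resulting counts $\ell(\ell-1)$, $\ell(\ell+1)$, $\ell^2$ and the case analysis are identical.
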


\begin{remark}
Note that for $a=0$, the first two cases each occur $(\ell-1)/2$ times; and for a fixed $a\neq0$, the first, second, and third case respectively occur $(\ell-1)/2$ times, $(\ell-3)/2$ times, and once. %This can be seen by writing $c(a+c)=(c+a/2)^2-a^2/4$ and noting that the fifth case occurs exactly when $b=-a^2/4$.
\end{remark}

\begin{proof}
%(REWRITE BECAUSE I JUST REALIZED THE $a=0$ CASE DOESN'T NEED TO BE HANDLED SEPARATELY)

%Let $g\in \GL_2(\mathbb{F}_\ell)$. Suppose that $a$, the trace of $g,$ is $0.$ Then $g$ is of the form $\begin{pmatrix}c & m\\n & -c\end{pmatrix}$. Then $-c^2-mn\equiv b\pmod{\ell}$, or $mn\equiv -b-c^2\pmod{\ell}$. If $-b$ is not a quadratic residue modulo $\ell$, then for each of the $\ell$ choices of $c$ we have $\ell-1$ choices of $m$, after which $n$ is fixed, giving a count of $\ell(\ell-1)$. If $-b$ is a quadratic residue modulo $\ell$, then for each of the $\ell-2$ choices of $c$ so that $-b-c^2\not\equiv0\pmod{\ell}$ we still have $\ell-1$ choices of $m$ after which $n$ is fixed, but if $-b-c^2\equiv0\pmod{\ell}$ then we have $2\ell-1$ choices of the ordered pair $(m,n)$. This gives a count of $(\ell-2)(\ell-1)+2(2\ell-1)=\ell(\ell+1)$.

%Now, suppose that $a\neq0$, so that $g$ is of the form $\begin{pmatrix}a+c & m\\n & -c\end{pmatrix}$. Then $mn\equiv-b-c(a+c)\pmod{\ell}$. By the same argument as above, depending on whether there are $0$, $1$, or $2$ choices of $c$ for which the right-hand side is $0$, we respectively have $\ell(\ell-1)$, $(\ell-1)^2+2\ell-1=\ell^2$, or $(\ell-2)(\ell-1)+2(2\ell-1)=\ell(\ell+1)$ choices of the ordered triple $(c,m,n)$. Now we have $-b-c(a+c)\equiv0$ exactly when $-b+a^2/4\equiv(c+a/2)^2$, so this implies the desired result.
Let $g\in \GL_2(\mathbb{F}_\ell)$ have trace $a$, so that $g$ is of the form $\begin{pmatrix}a+c & m\\n & -c\end{pmatrix}$. Then $-c(a+c)-mn\equiv b\pmod{\ell}$, or $mn\equiv-b-c(a+c)\equiv-b+a^2/4-(c+a/2)^2\pmod{\ell}$. If $-b+a^2/4$ is not a quadratic residue modulo $\ell$, then for each of the $\ell$ choices of $c$ we have $\ell-1$ choices of $m$, after which $n$ is fixed, giving a count of $\ell(\ell-1)$ ordered triples $(c,m,n)$. If $-b+a^2/4$ is a nonzero quadratic residue modulo $\ell$, then for each of the $\ell-2$ choices of $c$ so that $-b+a^2/4-(c+a/2)^2\not\equiv0\pmod{\ell}$ we still have $\ell-1$ choices of $m$ after which $n$ is fixed, but if $-b+a^2/4-(c+a/2)^2\equiv0\pmod{\ell}$ then we have $2\ell-1$ choices of the ordered pair $(m,n)$. This gives a count of $(\ell-2)(\ell-1)+2(2\ell-1)=\ell(\ell+1)$ triples $(c,m,n)$. Finally, if $-b+a^2/4\equiv0\pmod{\ell}$, by similar reasoning, we have $(\ell-1)^2+2\ell-1=\ell^2$ triples $(c,m,n)$.
\end{proof}

Using Theorem 1.34 in \cite{Ono}, it is straightforward to show that the primes $\ell$ for which $\dim_\C S_2(\Gamma_0(\ell))=1$ are exactly $11,$ $17,$ and $19.$ 
The spaces $S_2(\Gamma_0(11)),$ $S_2(\Gamma_0(17)),$ and $S_2(\Gamma_0(19))$ are generated by the modular form associated with the elliptic curves $X_0(11),$ $X_0(17)$, and $X_0(19)$, respectively.
Because the dimensions of these spaces is $1$, the associated modular forms are Hecke eigenforms, and hence their Dirichlet series each possess an Euler product by Theorem 6.19 of \cite{Apostol}. Thus, the result of Deligne applies. 
In Proposition~19 of \cite{Serre2}, Serre provides sufficient criteria for when a subgroup $G$ of $\GL_2(\mathbb{F}_{\ell})$ is all of $\GL_2(\mathbb{F}_{\ell}).$
This criteria requires the existence of merely three elements of $G$ with trace and determinant satisfying certain conditions, along with the hypothesis that $\det:G\mapsto\mathbb{F}_{\ell}^\times$ should be surjective. 
Of course, since $\det\rho_\ell(\frob_p)=p$ in this case, the surjectivity of $\det$ is trivially guaranteed. Using this result, we only need to check the image of $\frob_p$ for primes $p\leq7$ in order to show that  $\rho_{11}$, $\rho_{17}$, and $\rho_{19}$ are surjective. Thus, given $\rho_{\ell}$, $\ell\in\lbrace 11,17,19\rbrace$, and an appropriate $d$ from Table~\ref{smalld}, we can use Lemma~\ref{dens}, Theorem~\ref{formula}, and the Chebotarev Density Theorem to compute the densities in the limit of each congruence class modulo $\ell$ for $A(p^2,d)$ where $p$ is prime.

\begin{theorem}[The Chebotarev Density Theorem] Let $L/K$ be Galois and let $C\subset\gal(L/K)$ be a conjugacy class. Then 
\[
\{\mathfrak{p}:\mathfrak{p}\text{ a prime of }K,\mathfrak{p}\nmid\Delta_{L/K},\frob_\mathfrak{p}\in C\}
\]
has arithmetic density $\frac{|C|}{|G|}.$
\end{theorem}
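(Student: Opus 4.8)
The plan is to reduce the assertion to the case of an abelian (indeed cyclic) Galois extension, where it follows from the analytic theory of Hecke $L$-functions, and then to upgrade the resulting Dirichlet density to the natural density stated here. Write $G=\gal(L/K)$.

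First I would fix $\sigma\in C$, set $H=\langle\sigma\rangle$, a cyclic group of order $m$, and let $M=L^{H}$, so that $L/M$ is cyclic with group $H$. A bookkeeping computation with decomposition groups then shows that, away from the density-zero set of primes of $K$ that ramify in $L$ or have residue degree $>1$ over $\Q$, a prime $\mathfrak p$ of $K$ with $\frob_{\mathfrak p}\in C$ has exactly $|Z_{G}(\sigma)|/m$ primes $\mathfrak P$ of $M$ above it of residue degree $1$ over $K$ with $\frob_{L/M}(\mathfrak P)=\sigma$, while a prime with $\frob_{\mathfrak p}\notin C$ has none: the point is that residue degree $1$ over $K$ forces the decomposition group in $L$ of a prime above $\mathfrak P$ to lie inside $H$, so that $\frob_{L/M}(\mathfrak P)$ is a Frobenius at $\mathfrak p$ (hence in $C$), and a count of the relevant double cosets yields the multiplicity $|Z_{G}(\sigma)|/m$. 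Since primes of $M$ of residue degree $\ge 2$ over $\Q$ also form a density-zero set, this identifies the density in $K$ of $\{\mathfrak p:\frob_{\mathfrak p}\in C\}$ with $\tfrac{m}{|Z_{G}(\sigma)|}$ times the density in $M$ of $\{\mathfrak P:\frob_{L/M}(\mathfrak P)=\sigma\}$; as $|C|=[G:Z_{G}(\sigma)]$, it remains only to show the latter density equals $1/m$.

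For the cyclic extension $L/M$ I would argue as follows. By class field theory $L$ is cut out by a subgroup of a ray class group $\mathrm{Cl}_{\mathfrak m}(M)$, the Artin map sends the class of $\mathfrak P$ to $\frob_{L/M}(\mathfrak P)$, and each character $\chi$ of $H$ inflates to a finite-order Hecke character of $M$ with $L$-function $L(s,\chi)=\prod_{\mathfrak P}\bigl(1-\chi(\mathfrak P)N\mathfrak P^{-s}\bigr)^{-1}$. Orthogonality of characters gives, as $s\to 1^{+}$,
\[
\sum_{\substack{\mathfrak P\text{ of }M\\ \frob_{L/M}(\mathfrak P)=\sigma}}\frac{1}{N\mathfrak P^{s}}
=\frac{1}{m}\sum_{\chi}\overline{\chi(\sigma)}\,\log L(s,\chi)+O(1).
\]
The trivial character contributes $\log\zeta_{M}(s)+O(1)\sim\log\tfrac{1}{s-1}$, since $\zeta_{M}$ has a simple pole at $s=1$; every nontrivial $\chi$ contributes $O(1)$, since its Hecke $L$-function is entire and nonzero at $s=1$. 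Thus the left side is $\sim\tfrac{1}{m}\log\tfrac{1}{s-1}$, so the primes $\mathfrak P$ of $M$ with $\frob_{L/M}(\mathfrak P)=\sigma$ have Dirichlet density $1/m$. Feeding this back, $\{\mathfrak p:\frob_{\mathfrak p}\in C\}$ has Dirichlet density $|C|/|G|$, and the natural density then follows from a Wiener--Ikehara Tauberian theorem, using that the associated Dirichlet series continues slightly past $\mathrm{Re}(s)=1$ with only the expected simple pole.

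The hard part is the analytic fact that $L(1,\chi)\neq0$ for every nontrivial finite-order Hecke character $\chi$ of $M$; this is exactly what prevents the nontrivial terms above from producing or cancelling a pole at $s=1$. One derives it from the factorization $\zeta_{L}(s)=\prod_{\chi}L(s,\chi)$, together with the facts that $\zeta_{L}$ has a simple pole at $s=1$ (already accounted for by the $\zeta_{M}$ factor) and that the $L(s,\chi)$ with $\chi\neq\mathbf 1$ are holomorphic at $s=1$, so none may vanish there. If one wishes to bypass class field theory entirely, Chebotarev's original field-crossing argument reduces the cyclic case to cyclotomic extensions, hence to Dirichlet's theorem on primes in arithmetic progressions; the cost is a more delicate combinatorial analysis of decomposition groups inside the compositum $L(\zeta_{m})$, which I would regard as the real technical heart of that route.
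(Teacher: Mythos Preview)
The paper does not prove this theorem: it is quoted as a classical result and immediately applied (with $K=\Q$ and $L$ the fixed field of $\ker\rho_\ell$) to compute the densities $\delta_d(t,\ell;X)$. There is therefore no ``paper's own proof'' to compare against.

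That said, your sketch is a correct outline of one of the standard proofs. The reduction step---passing to $M=L^{\langle\sigma\rangle}$ and counting, for each unramified degree-one prime $\mathfrak p$ of $K$, the primes $\mathfrak P$ of $M$ above it with $f(\mathfrak P/\mathfrak p)=1$ and $\frob_{L/M}(\mathfrak P)=\sigma$---is exactly Deuring's trick, and your multiplicity $|Z_G(\sigma)|/m$ is right (each prime $\mathfrak Q$ of $L$ with $\frob_{L/K}(\mathfrak Q)=\sigma$ has decomposition group $H$, hence is fixed by $H$ and gives a distinct $\mathfrak P$). The abelian case via orthogonality of characters of $H$ and nonvanishing of $L(1,\chi)$ for $\chi\neq\mathbf 1$ is standard, as is deducing $L(1,\chi)\neq 0$ from the factorization $\zeta_L(s)=\prod_\chi L(s,\chi)$. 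One small remark: the paper says ``arithmetic density,'' which in many references means Dirichlet density, so your Tauberian upgrade to natural density, while correct, may be supplying more than is asked.
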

We apply the Chebotarev Density Theorem with $K=\Q$ and $L$ the fixed field of the kernel of $\rho_\ell$, so that $\gal(L/K)\cong \GL_2(\Fl)$. Because the cycle type of $\frob_p$ corresponds exactly to the splitting type of $\mathfrak{p}$ when $\mathfrak{p}$ lies above $p,$ the Chebotarev Density Theorem gives us that the $p\neq \ell$ for which $a(p)\equiv a\pmod{\ell}$ and $p\equiv b\pmod{\ell}$ has density the relative size of the union of those conjugacy classes in $\GL_2(\mathbb{F}_\ell)$ with characteristic polynomial $X^2-aX+b.$

\subsection{Example:} Let $d=4,\ell=11$.
To compute the congruence relation for $A(n^2,4),$ we consider the logarithmic derivative of $\hcal_{4}(j(z))$,
\begin{eqnarray*}
-\frac{(\hcal_{4}(j(z)))'}{\hcal_{4}(j(z))}&=&-q\frac{d}{dq}\log\left(q^{-\frac12}\prod_{n=1}^{\infty}(1-q^n)^{A(n^2,4)}\right)\\
&=&\frac12+\sum_{n=1}^{\infty}\sum_{m|n}mA(m^2,4)q^n.
\end{eqnarray*}
By Theorem~\ref{main} and Lemma~\ref{Emod}, we know we can write this as a multiple of $E_{12}(z)\equiv E_2(z)$ and a cusp form modulo $11$ of weight $12$. Since $S_{12}$ is spanned by $\Delta(z),$ we can compare the coefficients of the constant, $q$, and $q^2$ terms on each side to obtain
$$-\frac{(\hcal_{4}(j(z)))'}{\hcal_{4}(j(z))}\equiv 6E_2(z)+9\Delta(z)\pmod {11}.$$
Thus by M\"obius inversion, we have a formula of the form of Theorem~\ref{formula}:
$$A(n^2,4)\equiv 10+\frac{9}{n}\sum_{m|n}\mu\left(\frac nm\right)\tau(m) \pmod{11}.$$
 
In the special case of $n=p$, this simplifies to
\[
A(p^2,4)\equiv10+9p^9\left(\tau(p)-1\right)\pmod{11}.
\]

Because the associated Galois representation $\rho_{11}$ is surjective, the densities in the limit of each congruence class modulo $11$ for $A(p^2,4)$ can be calculated from Lemma ~\ref{dens}. They appear in the last row of Table ~\ref{asymptotic}. This implies that for any $c\in\mathbb{F}_{11}$ there exist infinitely many primes $p$ for which $A(p^2,4)\equiv c\pmod{11}.$

\subsection{Example:} Let $d=20,\ell=31$.
Here the dimension of $S_{32}$ is $2$, so this example is more complicated. To compute the congruence relation for $A(n^2,20),$ we consider the logarithmic derivative of $\hcal_{20}(j(z))$,
\begin{eqnarray*}
-\frac{(\hcal_{20}(j(z)))'}{\hcal_{20}(j(z))}&=&-q\frac{d}{dq}\log\left(q^{-2}\prod_{n=1}^{\infty}(1-q^n)^{A(n^2,20)}\right)\\
&=&2+\sum_{n=1}^{\infty}\sum_{m|n}mA(m^2,20)q^n.
\end{eqnarray*}
Since it is straightforward to verify that $\hcal_{20}(x)|s_{31}(x)$ in $\mathbb F_{31}$, by Theorem~\ref{main} and Lemma~\ref{Emod} we know we can write this as a multiple of $E_{32}(z)\equiv E_2(z)$ and a cusp form modulo $31$ of weight $32$.

We choose the basis $\Delta^2(z)E_4^2(z)$, $\Delta(z)E_4^2(z)E_6^2(z)$ for $S_{32}$,
and so by comparing the coefficients of the constant, $q$, and $q^2$ terms on each side, we obtain
\begin{align}
\label{initial}
-\frac{(\hcal_{20}(j(z)))'}{\hcal_{20}(j(z))}\equiv 2E_2(z)+14\Delta^2(z)E_4^2(z)+23\Delta(z) E_4^2(z) E_6^2(z)\pmod {31}.
\end{align}
We now rewrite this in terms of simultaneous normalized Hecke eigenforms:
\begin{align*}
F_1(z)=&\Delta(z) E_4^2(z) E_6^2(z)+\frac{1711+\sqrt{18295489}}{184415616}\Delta^2(z)E_4^2(z)\text{ and}\\
F_2(z)=&\Delta(z) E_4^2(z) E_6^2(z)+\frac{1711-\sqrt{18295489}}{184415616}\Delta^2(z)E_4^2(z),
\end{align*}
 which are Galois conjugates. Since they are already normalized, Theorem~\ref{formtorep} applies. Now we conveniently chose the modulus $31$ so that these eigenforms are defined over $\mathbb{F}_{31}$. They are
\begin{align*}
F_1(z)&=\Delta(z) E_4^2(z) E_6^2(z)+ 22\Delta^2(z)E_4^2(z)\text{ and}\\
F_2(z)&=\Delta(z) E_4^2(z) E_6^2(z)+19\Delta^2(z)E_4^2(z),
\end{align*}
in terms of which Equation~\ref{initial} may be rewritten
\[
-\frac{(\hcal_{20}(j(z)))'}{\hcal_{20}(j(z))}\equiv 2E_2(z)+14F_1(z)+9F_2(z)\pmod {31}.
\]
Letting $F_1(z)=\sum a_1(n)q^n$ and $F_2(z)=\sum a_2(n)q^n$, we apply M\"obius inversion to obtain
\[
A(n^2,20)\equiv 14+\frac{1}{n}\sum_{m|n}\mu\left(\frac{n}{m}\right)(14a_1(m)+9a_2(m)) \pmod{31}
\]
as in Theorem~\ref{formula}.

If $\rho_1$ and $\rho_2$ are the Galois representations associated with $F_1$ and $F_2$ respectively, it is easily checked that
\[
\left\{\begin{pmatrix}
\rho_1(\frob_p)&0\\
0&\rho_2(\frob_p)\\
\end{pmatrix}:p\neq 31\right\}=\left\{\begin{pmatrix}
M&0\\
0&N\\
\end{pmatrix}:M,N\in GL_2(\mathbb{F}_p),\det N=\det M\right\}
\]
using the results of Serre \cite{Serre2} and Ribet \cite{Ribet}.

By a computation similar to the one in the $d=4,$ $\ell=11$ case, we get
\[
\delta_{20}(t,31;\infty)=\begin{cases}
\frac{991}{29760}&\mbox{ if }t=0 \\
\frac{1199}{37200}&\mbox{ if }t=1,2,9,14,21,29 \\
\frac{29}{900}&\mbox{ if }t=3,4,5,11,16,19,20,23,28 \\
\frac{719}{22320}&\mbox{ if }t=6,7,10,18,25,30 \\
\frac{14399}{446400}&\mbox{ if }t=8 \\
\frac{799}{24800}&\mbox{ if }t=12,13,15,17 \\
\frac{7193}{223200}&\mbox{ if }t=22,24,26,27.
\end{cases}.
\]

\end{document}